\font\smallit=cmti10
\font\smalltt=cmtt10
\renewcommand\section{\@startsection {section}{1}{\z@}
{-30pt \@plus -1ex \@minus -.2ex}
{2.3ex \@plus.2ex}
{\normalfont\normalsize\bfseries\boldmath}}
\renewcommand\subsection{\@startsection{subsection}{2}{\z@}
{-3.25ex\@plus -1ex \@minus -.2ex}
{1.5ex \@plus .2ex}
{\normalfont\normalsize\bfseries\boldmath}}
\renewcommand{\@seccntformat}[1]{\csname the#1\endcsname. }
\newtheorem{theorem}{Theorem}
\newtheorem{lemma}[theorem]{Lemma}
\newtheorem{corollary}[theorem]{Corollary}
\newtheorem{proposition}[theorem]{Proposition}
\newtheorem{conjecture}[theorem]{Conjecture}
\theoremstyle{definition}
\begin{document}

\begin{center}
\uppercase{\bf On the small prime factors of a non-deficient number}
\vskip 20pt

{\bf Joshua Zelinsky}\\
{\smallit Department of Mathematics, Hopkins School, New Haven, Connecticut, USA}\\
{\tt zelinsky@gmail.com}\\ 

\end{center}

\vskip 20pt
\centerline{\smallit Received: , Revised: , Accepted: , Published: } 
\vskip 30pt

\centerline{\bf Abstract}
\noindent
Let $\sigma(n)$ be the sum of the positive divisors of $n$. A number $n$ is non-deficient if $\sigma(n) \geq 2n$. We establish new lower bounds for the number of distinct prime factors of an odd non-deficient number in terms of its second smallest, third smallest and fourth smallest prime factors. We also obtain tighter bounds for odd perfect numbers.  We also discuss the behavior of $\sigma(n!+1)$, $\sigma(2^n+1)$, and related sequences.

\pagestyle{myheadings}
\markright{\smalltt INTEGERS: 21 (2021)\hfill}
\thispagestyle{empty}
\baselineskip=12.875pt
\vskip 30pt

\section{Introduction}

We will write $\sigma(n)$ to be the sum of the positive divisors of $n$. Recall, that $\sigma(n)$ is a multiplicative function. Recall further that numbers where $\sigma(n)=2n$ are said to be perfect, numbers where $\sigma(n) < 2n$ are said to be deficient, and numbers where $\sigma(n) > 2n $ are said to be abundant. A large amount of prior work has been done on estimating the density of abundant numbers. See for example \cite{Klyve} and \cite{Kobayashi}.

We will throughout this paper write $n = p_1^{a_1}p_2^{a_2} \cdots p_k^{a_k}$ where $p_1,p_2\cdots p_k$ are primes satisfying $p_1 < p_2 \cdots < p_k$. Bounds giving a lower bound for $n$ in terms of $p_i$ for either small $i$ or $i$ close to $k$ exist already in the literature, under the assumption that $n$ is non-deficient or under the stronger assumption that $n$ is an odd perfect number. 

Acquaah and Konyagin showed \cite{AcquaahKonyagin} that if $N$ is an odd perfect number, then \begin{equation}p_k < (3N)^{\frac{1}{3}},\label{AcquaahKonyagin1}\end{equation}and \begin{equation}p_{k-1} < (2N)^{\frac{1}{4}}.\end{equation} Building on their work, the author
\cite{Zelinskythird} showed that \begin{equation}p_{k-1} < (2N)^{\frac{1}{5}},\end{equation} and that \begin{equation} p_{k-1}p_k < 6^{\frac{1}{4}}N^{\frac{1}{2}}.\label{Zelinskydouble}
\end{equation}

However, Equations \ref{AcquaahKonyagin1} through \ref{Zelinskydouble} give a lower bound for $N$ in terms of $p_k$ and $p_{k-1}$. They do  not give a lower bound for $k$, except in so far as we have upper bounds for $N$ in terms of $k$, and the inequalities can then be chained together. In contrast, for $p_i$, when $i$ is small, the natural rout seems to go mainly in the other direction, using $p_i$ to bound $k$ from below, and then use those results to obtain a lower bound on $N$.

The main results of this paper are improved lower bounds on $k$ in terms of $p_2$ and $p_3$ when $n$ is odd and not deficient. We prove similar bounds with $p_4$ when $n$ is odd and perfect, and a slightly weaker result when $n$ is a primitive non-deficient number. Using the same techniques we will also produce estimates for $\sigma(m)$ when $m$ is near but not equal to $n!$.

Kishore \cite{Kishore} proved that if $n$ is an odd perfect number with $k$ distinct prime factors, and $2 \leq i \leq 6$, then one must have $$p_i < 2^{2^{i-1}}\left(k-i+1\right).$$ Note that these bounds are all linear in $p_i$ and $k$.  In this paper, for $i$ in the range $2 \leq i \leq 4$, we improve on Kishore's results and give better than linear bounds. Moreover, for $i =2$ and $i=3$, our results apply not just to odd perfect numbers but also to odd abundant numbers. 

For $p_1$, better than linear bounds are known, due to \cite{Norton} and the author \cite{Zelinskybig}. 
Most of the results in this paper rely on the same fundamental techniques as in \cite{Norton} and \cite{Zelinskybig}. The first section of this paper extends those techniques to these small prime divisors. The second section uses the same techniques to look at a related problem involving the sum of the divisors of numbers near $n!$.

We will write $h(n) = \frac{\sigma(n)}{n}$. Note that $h(n)$ is frequently called the {\emph{abundancy}} of $n$, but it has also been referred in the literature  as the {\emph{abundancy index}} of $n$ as well as just the {\emph{index}} of $n$. A number is perfect if and only if $h(n) =2$, is abundant if and only $h(n)>2$, and is deficient if and only $h(n) <2$.

One has that \begin{equation}h(n) = \frac{1}{n}\sum_{d|n} d= \sum_{d|n}\frac{1}{d}\end{equation} from which it follows that \begin{equation}\label{h inequality} h(mn)> h(n)\end{equation} whenever $m>1$. A consequence of \ref{h inequality} is that any multiple of an abundant number is abundant, and any multiple of a perfect number other than the number itself is abundant. Motivated by this consequence, there is a notion of  a {\emph{primitive abundant number}} which is a number $n$ which is abundant but where proper all divisors of $n$ are not abundant. There is some inconsistency in the literature here; some authors define primitive abundant numbers as including perfect numbers where all proper divisors are  deficient. In some respects, this is a more natural set to investigate. Others term this set to be the set of {\emph{primitive non-deficient numbers}}, which is the term we will use. The first substantial work on this topic was done by Dickson who proved that given a fixed $k$, there are only a finite number of primitive non-deficient  numbers with $k$ distinct prime factors\cite{Dickson}. Prior work on these sets looked at their density. See especially \cite{Avidon} as well as \cite{Lichtman} and \cite{Schinzel}.

We will write $H(n) = \Pi_{i=1}^k \frac{p_i}{p_i-1}$. It is well-known and not hard to show that $h(n) \leq H(n)$ (and this inequality is strict as long as $n>1$). In fact, $$ \lim_{m \rightarrow \infty} h(n^m) = H(n).$$ Thus, $H(n)$ is the best possible upper bound on $h(n)$ when we know only the distinct prime divisors of $n$, but have no other information about $n$. 

We will write $P_j$ as the $j$th prime number (note the capital letter distinguishing this from the $p_i$). 

Given a prime $P_j$ and a real number $\alpha >1$, we will write 
$b_{\alpha}(P_j)$ to be the smallest positive integer $b$ such that 
$$\alpha < \left(\frac{P_j}{P_j-1}\right)\left(\frac{P_{j+1}}{P_{j+1}-1}\right) \cdots \left(\frac{P_{j+b-1}}{P_{j+b-1}-1}\right).$$

We will similarly define $a_{\alpha}(j)$ as $b_{\alpha}(P_j)$.

These two functions, $a_{\alpha}(j)$ and $b_{\alpha}(P_j)$, are generalizations of the functions labeled as $a(j)$ and $b(P_j)$
in \cite{Norton} and \cite{Zelinskybig}. In particular, $a(j) =a_{2}(j)$ and $b(j) = b_2(j)$. The use of $\alpha =2$ arises naturally in the context when one is interested in bounding $k$ in terms of the smallest prime divisor of $n$, but examining other prime divisors requires looking at other values of $\alpha$. Note that if $P_j$ is the smallest prime divisor of an odd perfect or odd abundant number $n$ then $k \geq a(j)$. 

Norton proved
\begin{theorem} Let $t$ be a positive integer. Then: \begin{equation}\label{Norton 1-1}a(t) >t^2 -2t - \frac{t+1}{\log t} - \frac{5}{4} - \frac{1}{2t} - \frac{1}{4t \log t}. \end{equation}
\end{theorem}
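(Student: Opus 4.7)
The plan is to reduce the defining multiplicative condition to an additive one using the elementary inequality $1 + x \le e^x$, and then estimate the resulting sum via Rosser-type prime bounds together with an Euler-Maclaurin comparison. To prove $a(t) > b$ it suffices, by the definition of $a(t)$, to verify that $\prod_{i=t}^{t+b-1} P_i/(P_i-1) \le 2$. Writing $P_i/(P_i-1) = 1 + 1/(P_i-1) \le \exp(1/(P_i-1))$ reduces the task to exhibiting a large $b$ with
$$\sum_{i=t}^{t+b-1} \frac{1}{P_i - 1} \le \log 2.$$

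The second step is to upper bound this sum. Applying Rosser's inequality $P_i \ge i \log i$ gives $1/(P_i - 1) \le 1/(i \log i - 1)$, and a midpoint-rule (Euler-Maclaurin) comparison to the integral $\int dx/(x \log x)$ yields an upper bound of the form $\log \log(t + b - 1/2) - \log \log(t - 1/2) + \eta(t,b)$ for an explicit error $\eta(t,b)$. Imposing this be at most $\log 2$ and exponentiating twice produces $t + b - 1/2 \le (t-1/2)^2 \, e^{-\eta(t,b)}$, which upon expansion yields $b \le t^2 - 2t + 3/4$ plus the corrections contributed by $\eta(t,b)$. Matching the constants precisely should reproduce
$$b \le t^2 - 2t - \frac{t+1}{\log t} - \frac{5}{4} - \frac{1}{2t} - \frac{1}{4t\log t},$$
and any such $b$ then yields $a(t) > b$, which is the claim.

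The main obstacle is the careful bookkeeping of errors. Three separate sources of slack contribute: the gap in $1 + x \le e^x$ (which is $O(1)$ overall when summed over primes), the slack in Rosser's bound $P_i \ge i \log i$ (which supplies the dominant $O(t/\log t)$ term, accounting for the $(t+1)/\log t$ correction), and the Euler-Maclaurin remainder (which contributes smaller $O(1)$ and $O(1/t)$ corrections). Matching the exact coefficients in the statement requires using the sharpest explicit form of each inequality and combining them consistently; no conceptually new ingredient is required. Structurally, the argument rests on the single observation that $P_{t+a(t)-1} \gtrsim P_{t-1}^2$, which is what yields the quadratic main term $t^2 - 2t$; all remaining subtlety lies in converting this asymptotic into the precise explicit inequality.
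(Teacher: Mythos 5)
The paper does not actually prove this theorem: it is Norton's result, quoted with a citation to his 1960/61 paper, so there is no in-paper argument to measure your proposal against. Your outline --- reduce the product condition to $\sum_{i=t}^{t+b-1} 1/(P_i-1)\le\log 2$ via $1+x\le e^x$, bound the sum using Rosser's $P_i\ge i\log i$, and compare with $\int dx/(x\log x)$ to force $t+b\lesssim t^2$ --- is the natural route and is essentially the strategy of Norton's original argument, which likewise rests on Rosser's explicit prime bounds.

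That said, what you have written is a plan rather than a proof. The entire content of the theorem lies in the explicit lower-order terms $-\frac{t+1}{\log t}-\frac{5}{4}-\frac{1}{2t}-\frac{1}{4t\log t}$, and the step where you would derive them is deferred with ``matching the constants precisely should reproduce\dots''. Two specific points need care. First, the exponentiation is off: from $\log\log(t+b-\tfrac12)\le\log 2-\eta+\log\log(t-\tfrac12)$ you obtain $t+b-\tfrac12\le(t-\tfrac12)^{2e^{-\eta}}$, not $(t-\tfrac12)^2e^{-\eta}$. The correction sits in the exponent, so upon expansion it is multiplied by a factor of order $\log t$; this amplification is precisely the mechanism producing the $O(t/\log t)$ term, and getting the bookkeeping wrong here changes the coefficient of the dominant correction. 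Second, you must track separately the discrepancies between $1/(P_i-1)$, $1/(i\log i-1)$, and $1/(i\log i)$, and the midpoint-rule remainder, since these enter at different orders; asserting that the pieces ``should'' combine to give Norton's exact constants is not a verification. The skeleton is right, but as written the proposal establishes only $a(t)\ge t^2-2t+O(t/\log t)$ with an unspecified implied constant, not the stated explicit inequality.
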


In fact, $a(t)$ is asymptotic to $\frac{1}{2}t^2\log t$. When Norton was writing, bounds on the prime number theorem and related functions were not strong enough to easily give an explicit lower bound on $a(t)$ which had the correct lead term. Subsequently, the author proved an explicit version with the correct asymptotic \cite{Zelinskybig}:
\begin{theorem}
 For all $t>1$ we have \begin{equation} a(t) \geq \frac{t^2}{2}\left(\log t - \frac{3}{2}\log \log t +\frac{1}{20} + \frac{\log \log t}{\log t} \right) - t +1   \label{Improved a(n) bound}.\end{equation}
 \end{theorem}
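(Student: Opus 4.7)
The plan is to combine an explicit form of Mertens' third theorem with explicit bounds on the $n$-th prime $P_n$. The overall structure mirrors Norton's original argument, but the sub-leading constants are sharpened by using the stronger explicit estimates that have appeared since Norton's work (notably those of Dusart).

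First I would reformulate the problem. To prove $a(t) \geq B$ for a given $B$, it suffices to establish the single inequality
$$\prod_{i=t}^{t+B-2} \frac{P_i}{P_i-1} \leq 2,$$
since the product on the left is strictly increasing in the upper index, and such an inequality forces $b_2(P_t) > B-1$. Expressing the left-hand side as the ratio
$$\frac{\prod_{p \leq P_{t+B-2}} (1-1/p)^{-1}}{\prod_{p \leq P_{t-1}} (1-1/p)^{-1}}$$
reduces matters to controlling two Mertens-type products and then translating the resulting inequality between primes into one between their indices.

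For the Mertens step I would apply an explicit two-sided estimate of the shape
$$e^\gamma (\log x)\left(1 - \frac{c_1}{\log^2 x}\right) < \prod_{p \leq x}\left(1 - \frac{1}{p}\right)^{-1} < e^\gamma (\log x)\left(1 + \frac{c_2}{\log^2 x}\right),$$
valid above some explicit threshold. Bounding the numerator from above and the denominator from below, the displayed ratio is at most $(\log P_{t+B-2})/(\log P_{t-1})$ times a factor close to $1$, and requiring this bound not to exceed $2$ becomes a condition of the form
$$\log P_{t+B-2} \leq 2\log P_{t-1}\,(1 - \varepsilon(t)),$$
where $\varepsilon(t)$ collects the Mertens error and decays like $1/\log^2 P_{t-1}$. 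The final step is to insert an explicit Dusart-type upper bound on $P_{t+B-2}$ and lower bound on $P_{t-1}$, each with leading shape $P_n = n(\log n + \log\log n - 1 + o(1))$, and then solve the resulting inequality for $B$. The leading term $\tfrac{1}{2}t^2\log t$ comes from the approximate equality $P_{t+B-2} \approx P_{t-1}^2$, while the corrections $-\tfrac{3}{2}\log\log t$, $\tfrac{1}{20}$, and $\tfrac{\log\log t}{\log t}$ are produced by the secondary $\log\log n$ term in the prime bound together with $\varepsilon(t)$, and the final $-t+1$ comes from passing between the upper index $t+B-2$ and $B$ itself.

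The main obstacle is precisely this last bookkeeping step: the numerical constants in the stated bound are tight and quite sensitive to which specific explicit estimates are used as input. Obtaining them cleanly will require selecting the sharpest available explicit forms of Mertens' theorem and the prime number theorem for primes, and almost certainly supplementing the asymptotic argument with direct numerical verification for the finitely many small values of $t$ at which the explicit bounds are not yet in force.
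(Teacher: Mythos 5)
First, a caveat about the ground truth here: this paper does not actually prove Inequality~\ref{Improved a(n) bound}; it imports it from the author's earlier paper \cite{Zelinskybig} with no argument given. So the only fair comparison is against the method that paper (and the present one, in Theorem~\ref{General b alpha bound}) uses for bounds of exactly this shape.

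Measured against that, your strategy is the right one and essentially the same one: reduce $a(t)\geq B$ to the single inequality $\prod_{i=t}^{t+B-2}P_i/(P_i-1)\leq 2$, write the product as a ratio of two Mertens products, control it with an explicit Rosser--Schoenfeld/Dusart estimate as in Lemma~\ref{Rosser Schoenfeld bounds for Mertens product}, convert the resulting condition into $\log P_{t+B-2}\lesssim 2\log P_{t-1}$, and finish with explicit bounds on $\pi(x)$ and $P_n$ of the type in Inequalities~\ref{Dusart 1999 pi bound} and~\ref{Modified Dusart Pn bound}, plus a finite computation for small $t$. Your accounting of where each term comes from (the leading $\tfrac{1}{2}t^2\log t$ from $P_{t+B-2}\approx P_{t-1}^2$, the $-t+1$ from $-\pi(P_t)+1$) is correct. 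The one genuine gap is the one you name yourself: the proposal never actually produces the constants $-\tfrac{3}{2}\log\log t$, $\tfrac{1}{20}$, and $\tfrac{\log\log t}{\log t}$, and for this particular theorem that bookkeeping \emph{is} the theorem --- the asymptotic shape was already known to Norton. One technical point you should make explicit when executing the plan: after bounding the Mertens ratio you need an inequality of the form $\log q\geq 2\log p - O(1/\log p)$ extracted from $A+\tfrac{1}{2A}\geq 2\bigl(B-\tfrac{1}{2B}\bigr)$ with $A=\log q$, $B=\log p$; this is exactly the role of Lemma~\ref{obnoxious technical lemma} in the present paper, and without some such elementary lemma the error term $\varepsilon(t)$ you introduce cannot be propagated cleanly into the final displayed constants. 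As written, then, this is a correct and faithful plan rather than a proof.
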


Notice that this theorem has the correct first lead term. 

Norton also proved that \begin{equation} \label{Norton 5} a(n) = \frac{1}{2}n^2 \log n + \frac{1}{2} n^2 \log \log n -\frac{3}{4}n^2 + \frac{n^2 \log \log n }{2 \log n} + O\left(\frac{n^2}{\log n}\right).\end{equation} 

Thus, Inequality \ref{Improved a(n) bound} is not as strong as it could be. Although Inequality \ref{Improved a(n) bound} has the correct lead term, the later terms do not match the asymptotic. 

In the final section of this paper we use the same techniques to give a new proof of  a theorem about the abundancy of numbers of the form $m! + z$ where $z$ is a fixed non-zero integer. We will prove the following.

\begin{theorem} Let $z$ be a non-zero integer. Then $$\lim_{m \rightarrow \infty} h(m! + z) = h(|z|). $$ \label{m! + z theorem}
\end{theorem}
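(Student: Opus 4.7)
The plan is to exhibit, for all sufficiently large $m$, a coprime factorization $m!+z = |z|\cdot M$ in which every prime divisor of $M$ exceeds $m$; multiplicativity of $h$ then yields $h(m!+z) = h(|z|)\,h(M)$, reducing the theorem to showing $h(M)\to 1$ as $m\to\infty$. To obtain the factorization, I would argue $p$-adically: for any prime $p\leq m$ we have $v_p(m!) = \sum_{i\geq 1}\lfloor m/p^i\rfloor\to\infty$, so once $m$ exceeds a threshold depending on $z$, $v_p(m!) > v_p(z)$ for every prime $p\mid z$. This forces $v_p(m!+z) = v_p(z)$ for every $p\mid z$, while for primes $p\leq m$ with $p\nmid z$ one has $p\nmid m!+z$. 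Hence the part of $m!+z$ supported on primes at most $m$ is exactly $|z|$, and the remaining coprime factor $M$ has only prime divisors above $m$.

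For the second step, I would apply the $b_\alpha$ machinery introduced earlier in the paper. Letting $P_j$ be the smallest prime exceeding $m$, every prime factor of $M$ is at least $P_j$, so $h(M)\leq H(M)\leq \prod_{i=0}^{\omega(M)-1} P_{j+i}/(P_{j+i}-1)$, which is strictly less than $1+\varepsilon$ as soon as $\omega(M) < b_{1+\varepsilon}(P_j)$. By Mertens' theorem, $\prod_{p\leq N} p/(p-1)\sim e^{\gamma}\log N$, so $b_{1+\varepsilon}(P_j)$ grows essentially like the number of primes in $[m,m^{1+\varepsilon}]$, which is of order $m^{1+\varepsilon}/\log m$. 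On the other hand, $\omega(M)\leq \log M/\log m\leq \log(m!+|z|)/\log m = m(1+o(1))$. Since $m^\varepsilon/\log m\to\infty$ for every fixed $\varepsilon > 0$, one has $\omega(M) < b_{1+\varepsilon}(P_j)$ for all sufficiently large $m$, so $h(M) < 1+\varepsilon$ eventually; combined with $h(M)\geq 1$ and the arbitrariness of $\varepsilon$, this gives $h(M)\to 1$.

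The main obstacle I anticipate is technical rather than conceptual: one must turn the Mertens asymptotic into an effective inequality $b_{1+\varepsilon}(P_j) > m(1+o(1))$, and here $b_{1+\varepsilon}$ is evaluated at $\alpha = 1+\varepsilon$ close to $1$ rather than at the $\alpha = 2$ appearing in Norton's and the author's earlier theorems. Nevertheless, the required Mertens-type input is much coarser than that driving the bounds on $a(t)$, and the quantitative gap between $\log m$ and $m^\varepsilon$ is generous, so the bookkeeping developed in the first section of the paper should suffice to make this precise.
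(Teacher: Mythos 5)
Your proposal is correct and follows essentially the same route as the paper: isolate the part of $m!+z$ supported on primes $\le m$ (which equals $|z|$ for large $m$), and then use the Mertens-based asymptotic $b_\alpha(p) \sim p^\alpha/(\alpha \log p)$ to show the cofactor, which has at most $m(1+o(1))$ distinct prime factors because it is smaller than $m^{Cm}$ while each of its prime factors exceeds $m$, cannot satisfy $H \ge 1+\varepsilon$. The paper phrases the second step as a contradiction and leaves the coprime factorization implicit, but these are only cosmetic differences; your explicit $p$-adic justification of $m!+z = |z|\cdot M$ fills in exactly the step the paper glosses over.
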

Theorem \ref{m! + z theorem} appears to be a known folklore result; we are uncertain if it has been previously published. 

\section{Small prime factors of non-deficient numbers}

We require the following explicit version of Mertens' Theorem.

\begin{lemma} We have 
\begin{equation} e^\gamma (\log x)\left(1 - \frac{1}{2 \log^2 x}\right)  <  \prod_{p \leq x } \frac{p}{p-1} < e^\gamma (\log x)\left(1 + \frac{1}{2 \log^2 x}\right). 
\end{equation} Here $\gamma$ is Euler's constant. The lower bound is valid for $x>3$ and the upper bound is valid for $x \geq 286.$
\label{Rosser Schoenfeld bounds for Mertens product}
\end{lemma}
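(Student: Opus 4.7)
The statement is the classical explicit form of Mertens' third theorem, first proved in the form stated by Rosser and Schoenfeld (1962), so the cleanest route is simply to cite their paper. If instead one wants to reconstruct the proof, the plan is to pass from the product to a sum by taking logarithms and compare it term by term with a known explicit estimate for $\sum_{p \leq x} 1/p$.

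First I would write
\begin{equation*}
\log \prod_{p \leq x} \frac{p}{p-1} = -\sum_{p \leq x} \log\!\left(1 - \frac{1}{p}\right) = \sum_{p \leq x} \frac{1}{p} + \sum_{p \leq x}\left(-\log\!\left(1-\frac{1}{p}\right) - \frac{1}{p}\right),
\end{equation*}
and then use the identity $\gamma = M + \sum_p \bigl(\log(1-1/p) + 1/p\bigr)$, where $M$ is the Meissel--Mertens constant, to rewrite the right-hand side as
\begin{equation*}
\sum_{p \leq x} \frac{1}{p} - M + \gamma - \sum_{p > x}\left(-\log\!\left(1-\frac{1}{p}\right) - \frac{1}{p}\right).
\end{equation*}
The point is that each piece is explicitly controllable: for the first difference one invokes the Rosser--Schoenfeld explicit form of Mertens' second theorem, $\sum_{p \leq x}1/p = \log\log x + M + R(x)$ with $|R(x)|$ bounded by an explicit function of $x$ decaying like $1/\log^{k} x$; and for the tail one estimates
$-\log(1-1/p)-1/p = \sum_{j \geq 2} 1/(jp^j) \leq 1/(2p(p-1))$, so that the tail is bounded by an explicit multiple of $1/x$ via partial summation against $\pi(t)$.

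Combining these pieces gives $\log\prod_{p \leq x}p/(p-1) = \gamma + \log\log x + E(x)$ with $E(x)$ explicitly bounded. Exponentiating and using $e^{E(x)} = 1 + E(x) + O(E(x)^2)$ for small $E$ turns this into the claimed two-sided inequality, provided $E(x)$ is small enough that $|E(x)| + O(E(x)^2) \leq 1/(2\log^2 x)$.

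The main obstacle is purely computational: carefully bookkeeping the explicit error terms so that the final constant in front of $1/\log^2 x$ is exactly $1/2$, and verifying the two threshold conditions ($x > 3$ for the lower bound, $x \geq 286$ for the upper bound). For small $x$ in the stated ranges one must supplement the asymptotic argument with a finite numerical check up to $286$, which is the source of that specific constant in the statement. Because this computation has been done in the literature and the threshold $286$ is well documented, I would in practice simply invoke the Rosser--Schoenfeld paper and treat the above sketch as a verification of what is being quoted.
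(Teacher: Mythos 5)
Your proposal ultimately does exactly what the paper does: it cites Rosser and Schoenfeld (1962), which is the paper's entire proof of this lemma. The additional sketch of how one would reconstruct the explicit Mertens estimate (taking logarithms, using the identity for $\gamma$ in terms of the Meissel--Mertens constant, bounding the tail, and exponentiating) is a reasonable outline of the underlying argument, but the paper treats this as a quotable known result and so should you.
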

\begin{proof} This is proven in  \cite{RosserSchoenfeld1962}.
\end{proof}

The following asymptomtic is a straightforward consequence of this explicit version of Mertens' Theorem and the Prime Number Theorem.

\begin{theorem}\label{asymptotic for b_alpha} Let $\alpha >1$ be fixed. Then $$b_{\alpha}(p) \sim \frac{p^{\alpha}}{\alpha \log p},$$
and $$a_{\alpha}(n) \sim \frac{n^{\alpha} \log n}{\alpha}.$$
\end{theorem}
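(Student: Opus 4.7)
The plan is to reduce both asymptotics to the explicit Mertens bound (Lemma~\ref{Rosser Schoenfeld bounds for Mertens product}) combined with the Prime Number Theorem in the form $P_j \sim j \log j$. The key observation is that the defining product for $b_\alpha(P_j)$ telescopes: writing $b = b_\alpha(P_j)$, one has
$$\prod_{i=0}^{b-1}\frac{P_{j+i}}{P_{j+i}-1} \;=\; \frac{\prod_{p \leq P_{j+b-1}} \frac{p}{p-1}}{\prod_{p \leq P_{j-1}}\frac{p}{p-1}}.$$
This converts a local product over a window of primes into a ratio of the Mertens-type products whose size we already control.

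Next, applying Lemma~\ref{Rosser Schoenfeld bounds for Mertens product} to numerator and denominator cancels the $e^\gamma$ factor and shows that the ratio above equals $\frac{\log P_{j+b-1}}{\log P_{j-1}}(1+o(1))$ as $P_j \to \infty$. By minimality in the definition of $b_\alpha$, the ratio crosses $\alpha$ precisely at $b$, and since an individual Mertens factor $P_{j+b}/(P_{j+b}-1) = 1 + O(1/P_{j+b})$ differs from $1$ by $o(1)$, the crossing value pins down $\log P_{j+b-1} = \alpha \log P_{j-1}\,(1+o(1))$, equivalently $P_{j+b-1} \sim P_j^\alpha$. Invoking the Prime Number Theorem in the form $\pi(x) \sim x/\log x$ gives $j+b-1 = \pi(P_{j+b-1}) \sim P_j^\alpha/(\alpha \log P_j)$. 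Because $\alpha > 1$, the term $j \sim P_j/\log P_j$ is of strictly smaller order, so it is absorbed into the error, yielding $b_\alpha(P_j) \sim P_j^\alpha/(\alpha \log P_j)$, which is the first asymptotic. The second asymptotic then follows by setting $p = P_n$ and substituting $P_n \sim n \log n$ together with $\log P_n \sim \log n$.

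The step I expect to be the main (if mild) technical obstacle is the transition in the previous paragraph from the Mertens ratio reaching $\alpha$ to the sharp asymptotic $P_{j+b-1} \sim P_j^\alpha$: one must supply matching upper and lower bounds for the index at which the partial product crosses $\alpha$, and verify that the multiplicative jump introduced by a single extra prime shrinks to $1$ fast enough that these bounds coincide to leading order. Everything else is a routine unpacking of Mertens and the Prime Number Theorem, and no new arithmetic input about $\sigma(n)$ is required.
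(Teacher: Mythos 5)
The paper offers no proof of this theorem at all --- it is simply asserted to be ``a consequence of this explicit version of Merten's Theorem and the Prime Number Theorem'' --- so your route (telescoping the window product into a ratio $S(P_{j+b-1})/S(P_{j-1})$, applying Lemma~\ref{Rosser Schoenfeld bounds for Mertens product}, and finishing with $\pi(x)\sim x/\log x$) is exactly the intended argument, and your derivation of the first asymptotic is sound. One refinement on the step you flag as the obstacle: the binding error is not the single-factor jump $1+O(1/P_{j+b-1})$ (which is negligible) but the Mertens error itself. A bare $o(1)$ in $\frac{S(P_{j+b-1})}{S(P_{j-1})}=\frac{\log P_{j+b-1}}{\log P_{j-1}}(1+o(1))$ would only give $P_{j+b-1}=P_j^{\alpha+o(1)}$, which is \emph{not} the same as $P_{j+b-1}\sim P_j^\alpha$. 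You need the explicit $O(1/\log^2 x)$ decay in Lemma~\ref{Rosser Schoenfeld bounds for Mertens product}, which turns the relation into $\log P_{j+b-1}=\alpha\log P_j+O(1/\log P_j)$, and only then does exponentiating give $P_{j+b-1}\sim P_j^\alpha$. With that quantification the rest of your first paragraph is correct.

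The genuine problem is your last sentence. Substituting $P_n\sim n\log n$ and $\log P_n\sim\log n$ into $b_\alpha(p)\sim p^\alpha/(\alpha\log p)$ yields
$$a_\alpha(n)=b_\alpha(P_n)\sim\frac{(n\log n)^\alpha}{\alpha\log n}=\frac{n^\alpha(\log n)^{\alpha-1}}{\alpha},$$
which agrees with the displayed formula $n^\alpha\log n/\alpha$ only when $\alpha=2$. So the substitution you describe does not ``then follow'' to the stated second asymptotic; it proves a different one. (The evidence strongly suggests the theorem statement itself carries a typo inherited from the $\alpha=2$ case, since the paper elsewhere records $a_2(t)\sim\tfrac12 t^2\log t$, consistent with $(\log n)^{\alpha-1}$ at $\alpha=2$.) Either way, a correct write-up must either prove the corrected formula $n^\alpha(\log n)^{\alpha-1}/\alpha$ or explain why the stated one holds; as written, your final step silently asserts a conclusion that your own computation contradicts for $\alpha\neq 2$.
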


Much of our work will be trying to make Theorem \ref{asymptotic for b_alpha} explicit for certain specific values of $\alpha$. We will for our purposes only need to construct explicit lower bounds, but it may be interesting to construct similarly explicit upper bounds.

We will write $S(x)=\prod_{p \leq x}\frac{p}{p-1}$, and will write $$Q(x,y)= \prod_{y < p \leq x} \frac{p}{p-1}=\frac{S(x)}{S(y)}$$ and will when we write this assume that $x>y >3$. 

\begin{lemma}\label{obnoxious technical lemma} Let $A$, $B$ and $\alpha$ be real numbers.  Assume that $A > B > 6$, and that $3 \geq \alpha >1$. Assume further that 
$$A + \frac{1}{2A} \geq \alpha\left(B - \frac{1}{2B}\right).$$ Then 
$A \geq \alpha B - \frac{2}{B}$. 
\end{lemma}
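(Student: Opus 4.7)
The plan is to rearrange the hypothesis into the desired conclusion directly, since this appears to be a purely algebraic manipulation with no serious number-theoretic content. I would start by expanding the hypothesis $A + \frac{1}{2A} \geq \alpha\bigl(B - \frac{1}{2B}\bigr)$ as
$$A \geq \alpha B - \frac{\alpha}{2B} - \frac{1}{2A}.$$
Comparing this with the target inequality $A \geq \alpha B - \frac{2}{B}$, it suffices to prove the error bound
$$\frac{\alpha}{2B} + \frac{1}{2A} \leq \frac{2}{B}.$$

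For this, I would use $A > B$ to replace $\frac{1}{2A}$ by the larger quantity $\frac{1}{2B}$, giving
$$\frac{\alpha}{2B} + \frac{1}{2A} < \frac{\alpha+1}{2B}.$$
Then the hypothesis $\alpha \leq 3$ yields $\frac{\alpha+1}{2B} \leq \frac{4}{2B} = \frac{2}{B}$, which is precisely what is needed.

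The whole argument is essentially one line once you notice that the only role of the $\alpha$ on the right and the $\frac{1}{2A}$ on the left is to contribute together at most $\frac{\alpha+1}{2B}$, which the bound $\alpha \leq 3$ controls. There isn't really an obstacle here; the assumption $B > 6$ is not even needed for the argument I am proposing, so I suspect it is included only because the lemma will be applied in a context (likely $A = \log x$, $B = \log y$ arising from Lemma \ref{Rosser Schoenfeld bounds for Mertens product}) where it is automatically satisfied. My only caution would be to verify that I am interpreting the hypothesis correctly and that strict versus non-strict inequalities line up, but no sharper estimate or case analysis appears necessary.
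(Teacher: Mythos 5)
Your proof is correct. From the hypothesis you get $A \geq \alpha B - \frac{\alpha}{2B} - \frac{1}{2A}$, and since $A > B > 0$ gives $\frac{1}{2A} < \frac{1}{2B}$, the two error terms are together less than $\frac{\alpha+1}{2B} \leq \frac{4}{2B} = \frac{2}{B}$ by $\alpha \leq 3$, which is exactly the required bound. This is a genuinely different route from the paper's. The paper exploits the monotonicity of the map $x \mapsto x + \frac{1}{2x}$ (increasing for $x > \frac{1}{\sqrt{2}}$): it reduces the claim to checking that this function, evaluated at the candidate lower bound $\alpha B - \frac{2}{B}$, is at most $\alpha\left(B - \frac{1}{2B}\right)$, and then verifies that inequality by clearing denominators and using $\alpha \leq 3$ together with positivity of the resulting denominator. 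The paper's verification is noticeably messier (and as printed contains some sign/transcription slips in the intermediate algebra), while never invoking $A > B$; your argument leans on $A > B$ instead and in exchange is a two-line computation that, as you correctly observe, does not even need $B > 6$. Your version is the cleaner proof of the stated lemma; the only thing to double-check, which you did, is that replacing $-\frac{1}{2A}$ by the smaller quantity $-\frac{1}{2B}$ goes in the right direction for a lower bound on $A$.
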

\begin{proof} We note that $A+\frac{1}{A}$ is an increasing function in $A$ for all $A>1$. So we just need to verify that 
$\left(\alpha B - \frac{2}{ B}\right) + \frac{1}{2(\alpha B - \frac{2}{ B})} \leq  \alpha(B - \frac{1}{2B}).$ This is the same as verifying that 
$$\alpha B - \frac{\alpha}{2B} - \left(\left(\alpha B - \frac{2}{ B}\right) + \frac{1}{2( \alpha B - \frac{2}{ B})}  \right) \geq 0. $$ We can rewrite the expression on the right-hand side above as

$$ \frac{\left(-\alpha +4\right)(\alpha B + \frac{2}{B}) -B}{2B\left(\alpha B - \frac{2}{B}\right)}$$ which has a positive denominator (since $B > \sqrt{2}$), so we just need to look
at $$(-\alpha +4)(\alpha B + \frac{2}{B} - B). $$ Since $\alpha \leq 3$, we have $$(-\alpha +4)(\alpha B + \frac{2}{B} - B) \geq (\alpha B + \frac{2}{B}) - B > \frac{2}{B} >0.$$

\end{proof}

We will need a few explicit estimates of certain functions of primes.
We have from \cite{Dusart 1999} that \begin{equation}\label{Dusart 1999 pi bound} \frac{x}{\log x}\left(1+ \frac{0.992}{\log x}\right) \leq \pi(x) \leq \frac{x}{\log x}\left(1 + \frac{1.2762}{\log x} \right),
\end{equation} 
with the lower bound valid if $x \geq 599$ and the upper bound valid for all $x > 1$.

We also need from \cite{Zelinskybig} that for $n \geq 2$,

 \begin{equation}\label{Modified Dusart Pn bound}  P_n \geq n\left(\log n + \log \log n -1 + \frac{32}{31 (\log n)^2} \right). \end{equation}

\begin{theorem} Let $\alpha$ be a real number such $1< \alpha \leq 3$ and let $p$ be a prime with $p \geq 853$.  Then $$b_{\alpha}(p) \geq \frac{p^\alpha}{\alpha \log p} -  \frac{1.669p^\alpha}{\alpha \log^2  p}  - \frac{ 0.662 p^\alpha}{\alpha \log^3  p} - \frac{p}{\log p} - \frac{1.2762p}{\log^2p } +1  .$$\label{General b alpha bound}
\end{theorem}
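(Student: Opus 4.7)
The plan is to convert the defining product inequality for $b = b_\alpha(p)$ into a lower bound on $P_N$, where $N = i+b-1$ and $p = P_i$, and then to count the primes between $p$ and $P_N$ using Dusart's explicit bound. Writing the definition as $\alpha < \prod_{j=0}^{b-1} P_{i+j}/(P_{i+j}-1) = S(P_N)/S(P_{i-1})$, I factor out the leading $p/(p-1)$ factor to obtain the cleaner form $\alpha (p-1)/p < S(P_N)/S(p)$.

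Apply Lemma~\ref{Rosser Schoenfeld bounds for Mertens product}, using the upper bound on $S(P_N)$ (valid since $P_N \geq p \geq 853 > 286$) and the lower bound on $S(p)$ (valid since $p > 3$), to rewrite this as
$$\alpha \cdot \frac{p-1}{p} \log p \left(1 - \frac{1}{2 \log^2 p}\right) < \log P_N \left(1 + \frac{1}{2 \log^2 P_N}\right).$$
Now invoke Lemma~\ref{obnoxious technical lemma} with $A = \log P_N$, $B = \log p$, and with $\alpha(p-1)/p$ playing the role of $\alpha$; the hypotheses $B > 6$ and $A > B$ are easily checked (using $p \geq 853 > e^6$ and the non-trivial case $b \geq 2$), and the bound $\alpha(p-1)/p \leq 3$ follows from $\alpha \leq 3$. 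The conclusion
$$\log P_N \geq \alpha \log p - \frac{\alpha \log p}{p} - \frac{2}{\log p}$$
exponentiates, via $e^{-x} \geq 1-x$, to the lower bound
$$P_N \geq p^\alpha\left(1 - \frac{\alpha \log p}{p} - \frac{2}{\log p}\right).$$

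To finish, I use $b = \pi(P_N) - \pi(p) + 1$. The lower bound in \eqref{Dusart 1999 pi bound} gives $\pi(P_N) \geq (P_N/\log P_N)(1 + 0.992/\log P_N)$ (valid since $P_N \geq 599$), and since this is an increasing function of $P_N$ in this range I may substitute the bounds on $P_N$ and on $\log P_N$ obtained in the previous step. A Taylor-style expansion using $(1-u)^{-1} \geq 1+u$ and $(1-u)^{-2} \geq 1+2u$, combined with the hypothesis $\alpha \leq 3$ to keep the resulting coefficients uniform in $\alpha$, produces a lower bound for $\pi(P_N)$ matching the first three terms on the right of the stated inequality. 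Subtracting the upper bound $\pi(p) \leq p/\log p + 1.2762 p/\log^2 p$ from \eqref{Dusart 1999 pi bound} then contributes the $-p/\log p - 1.2762 p/\log^2 p$ summands, and the final $+1$ comes from the $+1$ in $b = \pi(P_N) - \pi(p) + 1$. A tiny edge region of $\alpha$ extremely close to $1$, where $\alpha(p-1)/p \leq 1$, is handled separately: there $b_\alpha(p) = 1$ and the stated right-hand side is at most $1$.

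The hard part is the final expansion: several loose inequalities are composed and one must pin down the precise constants. In particular, the correction $\alpha \log p / p$ in the lower bound on $P_N$ produces an unwelcome $p^{\alpha-1}$-sized error that must be absorbed into the $p^\alpha/(\alpha \log^3 p)$ term; this is exactly where the threshold $p \geq 853$ and the bound $\alpha \leq 3$ are used, and where the numerical values $1.669$ and $0.662$ appear, as the worst-case coefficients arising from combining the $-2/\alpha$ Mertens correction with the $+0.992/\alpha^2$ Dusart correction and with the residual $p^{\alpha-1}$ contribution.
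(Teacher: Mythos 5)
Your proposal follows essentially the same route as the paper's proof: use the Rosser--Schoenfeld form of Mertens' theorem together with Lemma~\ref{obnoxious technical lemma} to bound the largest prime in the defining product from below by roughly $p^{\alpha}(1-\tfrac{2}{\log p})$, then count primes via the Dusart bounds \eqref{Dusart 1999 pi bound} and expand, so the approach is correct and matches. The one genuine difference is that you retain the factor $\tfrac{p-1}{p}$ that the paper silently drops (a point where you are actually more careful than the source); just be aware that the constants $1.669 = 2 - 0.331$ and $0.662 = 2\times 0.331$ arise exactly from the expansion \emph{without} the resulting $p^{\alpha-1}$-sized correction, so that term must be absorbed using slack elsewhere --- e.g.\ the second-order term of $e^{-x}\geq 1-x$, which contributes about $\tfrac{2p^{\alpha}}{\alpha\log^{3}p}\geq p^{\alpha-1}$ for $p\geq 853$ and $\alpha\leq 3$ --- rather than out of those coefficients.
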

\begin{proof}  Let $p \geq 601$ be a prime.  Let $q$ be the smallest prime such that $Q(p,q) \geq \alpha$. 

We have from Lemma \ref{Rosser Schoenfeld bounds for Mertens product} that \begin{equation}\alpha \leq  Q(q,p) \leq \frac{e^\gamma (\log q)\left(1 + \frac{1}{ 2\log^2 q}\right)}{e^\gamma (\log p)\left(1 - \frac{1}{2 \log^2 p}\right)}  =  \frac{ (\log q)\left(1 + \frac{1}{2 \log^2 q}\right)}{ (\log p)\left(1 - \frac{1}{2 \log^2 p}\right)}.  \end{equation}

If we set $A= \log q$, and $p = \log B$, we have
$$A+\frac{1}{2A} \geq \alpha B (1-\frac{2}{B^2}), $$ so we may apply Lemma \ref{obnoxious technical lemma} to get that
$\log q \geq \alpha \log p - \frac{2}{\log p}$. Then we have  
$q \geq p e^\frac{2}{\log p} $.

We have then $$b_{\alpha(p)} \geq \pi(p^{\alpha} e^\frac{-2}{\log p}) - \pi(p) +1.$$  We note that $e^{\frac{-2}{\log p}} \geq 1 - \frac{2}{\log p}$ and so
$$b_{\alpha(p)} \geq \pi\left(p^{\alpha}(1- \frac{2}{\log p})\right) - \pi(p) +1 . $$
Since $p \geq 853$, we have $p^{\alpha}\left(1- \frac{2}{\log p}\right) \geq 599$, and may apply Lemma \ref{Dusart 1999 pi bound} to  get that $$b_{\alpha(p)} \geq   \frac{p^{\alpha}(1- \frac{2}{\log p})}{\log p^{\alpha}(1- \frac{2}{\log p})}\left(1+ \frac{0.992}{\log p^{\alpha}(1- \frac{2}{\log p})}\right) -\frac{p}{\log p}\left(1 + \frac{1.2762}{\log p} \right) +1. $$ and so 
$$b_{\alpha(p)} \geq   \frac{p^{\alpha}(1- \frac{2}{\log p})}{ \alpha \log p}\left(1+ \frac{0.992}{\alpha \log p}\right) -\frac{p}{\log p}\left(1 + \frac{1.2762}{\log p} \right) +1. $$
Since $\alpha \leq 3$, we have 
$$b_{\alpha(p)} \geq  \frac{p^{\alpha}(1- \frac{2}{\log p})}{ \alpha \log p}\left(1+ \frac{0.331}{\log p}\right) -\frac{p}{\log p}\left(1 + \frac{1.2762}{\log p} \right) +1.$$ 

Multiplying out the terms gives us the desired inequality.
\end{proof}

We will need some specific versions of the above theorem for certain values of $\alpha$.

\begin{theorem} For all primes $p \geq 3$, one has 
\begin{equation}b_{\frac{4}{3}}(p) \geq \frac{p^{\frac{4}{3}}}{\frac{4}{3} \log p} -  \frac{1.25175
p^\frac{4}{3} }{\log^2  p}  - \frac{ 0.4965p^\frac{4}{3}}{ \log^3  p} - \frac{p}{\log p} - \frac{1.2762p}{\log^2p } +1\label{4/3s explicit b equation}, \end{equation}

\begin{equation}b_{\frac{16}{15}}(p) \geq \frac{p^{\frac{16}{15}}}{\frac{16}{15} \log p} -  \frac{1.5646875
p^\frac{16}{15} }{\log^2  p}  - \frac{0.620625p^\frac{16}{15}}{ \log^3  p} - \frac{p}{\log p} - \frac{1.2762p}{\log^2p } +1,\label{16/15s explicit b equation} \end{equation}

\begin{equation}b_{\frac{256}{255}}(p) \geq  \frac{p^{\frac{256}{255}}}{\frac{256}{255} \log p} -  \frac{1.662481
p^\frac{256}{255} }{\log^2  p}  - \frac{0.65942p^\frac{256}{255}}{ \log^3  p} - \frac{p}{\log p} - \frac{1.2762p}{\log^2p } +1\label{256/255 explicit b equation}, \end{equation}
and 
\begin{equation}b_{
\frac{26325}{26257}}(p) \geq  \frac{p^{
\frac{26325}{26257}}}{
\frac{26325}{26257} \log p} -  \frac{1.6647
p^ \frac{26325}{26257} }{\log^2  p}  - \frac{0.6603p^\frac{26325}{26257}}{ \log^3  p} - \frac{p}{\log p} - \frac{1.2762p}{\log^2p } +1\label{52514/26325 explicit b equation}. \end{equation}
\end{theorem}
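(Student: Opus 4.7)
The plan is to deduce each of the four inequalities directly from Theorem \ref{General b alpha bound} in the regime where it applies, and then dispose of the remaining small primes by a finite check. First observe that substituting the specific value of $\alpha$ into Theorem \ref{General b alpha bound} transforms the generic constants $1.669/\alpha$ and $0.662/\alpha$ into exactly the constants appearing in each of the four displays: for $\alpha = 4/3$ one has $1.669 \cdot 3/4 = 1.25175$ and $0.662 \cdot 3/4 = 0.4965$; for $\alpha = 16/15$ one has $1.669 \cdot 15/16 = 1.5646875$ and $0.662 \cdot 15/16 = 0.620625$; for $\alpha = 256/255$ one has $1.669 \cdot 255/256 \approx 1.662481$ and $0.662 \cdot 255/256 \approx 0.65942$; for $\alpha = 26325/26257$ one has $1.669 \cdot 26257/26325 \approx 1.6647$ and $0.662 \cdot 26257/26325 \approx 0.6603$. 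Thus each of the four inequalities holds for every prime $p \geq 853$ as an immediate consequence of Theorem \ref{General b alpha bound}.

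The remaining task is to verify each inequality for odd primes $3 \leq p < 853$. For this I would simply check by direct computation: for each such prime $p$, compute $b_\alpha(p)$ by multiplying $q/(q-1)$ over successive primes $q \geq p$ until the running product first exceeds $\alpha$, and then compare the resulting integer against the right-hand side of the inequality. For the smallest primes the right-hand side is in fact negative (the subtracted lower-order terms dominate the main term $p^\alpha/(\alpha \log p)$), so the inequality is trivial from $b_\alpha(p) \geq 1$. For the intermediate range where the right-hand side is positive, a quick calculation shows that $b_\alpha(p)$ comfortably exceeds it, so the check is routine.

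The main obstacle is therefore not mathematical depth but the bookkeeping of a finite verification performed for four values of $\alpha$ and the roughly $150$ odd primes below $853$. Some care is required in the cases $\alpha = 256/255$ and $\alpha = 26325/26257$, where $\alpha$ is extremely close to $1$: here the near-cancellation between $p^\alpha/(\alpha \log p)$ and $p/(\log p)$ in the general bound means that $b_\alpha(p)$ is very large, so the running partial product $\prod q/(q-1)$ must be tracked with sufficient precision (or, better, in exact rational arithmetic) to certify correctly the prime at which $\alpha$ is first exceeded. Aside from this computational care, each inequality is an immediate specialization of Theorem \ref{General b alpha bound}.
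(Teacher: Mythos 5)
Your proposal is correct and follows essentially the same route as the paper: the paper likewise obtains each inequality for $p \geq 853$ by specializing Theorem \ref{General b alpha bound} (the displayed constants are exactly $1.669/\alpha$ and $0.662/\alpha$, rounded up where necessary, which is safe since those terms are subtracted) and then disposes of the primes $3 \leq p \leq 853$ by a computer check. Your added remarks on the constant bookkeeping and on the precision needed when $\alpha$ is close to $1$ are sensible elaborations of the same argument.
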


\begin{proof} We will only discuss how to prove Equation \ref{4/3s explicit b equation}. The other equations have very similar proofs. Theorem \ref{General b alpha bound} implies that Equation \ref{4/3s explicit b equation} is true for all primes $p \geq 853$. We then use a computer program to check that it is true for all primes $p$ with $3 \leq p \leq 853$. The others are done similarly. 
\end{proof}

\begin{proposition} Suppose that $N$ is an odd number which is either perfect or abundant. Let $p_2$ be  second smallest prime divisor of $N$. Let $k$ be the number of distinct prime divisors of $N$. Then $k \geq b_{\frac{4}{3}}(p_2)  +1$.
\label{second smallest prime bound}
\end{proposition}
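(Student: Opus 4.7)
The plan is to deduce the bound directly from the standard estimate $h(N)<H(N)$, using nothing more than the definition of $b_\alpha$ and monotonicity of $p/(p-1)$.

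First, since $N$ is odd and non-deficient, I have $p_1\ge 3$ and $2\le h(N)$. The inequality $h(N)<H(N)$ (strict for $N>1$), which is recalled in the introduction, gives
$$2 \;<\; H(N) \;=\; \frac{p_1}{p_1-1}\prod_{i=2}^{k}\frac{p_i}{p_i-1}.$$
Because $p_1\ge 3$ and $p/(p-1)$ is decreasing in $p$, I have $p_1/(p_1-1)\le 3/2$, and therefore
$$\prod_{i=2}^{k}\frac{p_i}{p_i-1} \;>\; \frac{2(p_1-1)}{p_1}\;\ge\; \frac{4}{3}.$$

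Next I replace the actual primes $p_2<p_3<\cdots<p_k$ by the consecutive primes starting at $p_2$. Write $p_2=P_s$. Since $p_i\ge P_{s+i-2}$ for every $i$ with $2\le i\le k$, and since $p/(p-1)$ is decreasing, I obtain
$$\frac{4}{3} \;<\; \prod_{i=2}^{k}\frac{p_i}{p_i-1} \;\le\; \prod_{j=0}^{k-2}\frac{P_{s+j}}{P_{s+j}-1}.$$
The right-hand product is precisely the object that defines $b_{4/3}(p_2)$: by definition, $b_{4/3}(P_s)$ is the smallest positive integer $b$ for which the product $\prod_{j=0}^{b-1}\frac{P_{s+j}}{P_{s+j}-1}$ strictly exceeds $4/3$. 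Since the product over $j=0,\dots,k-2$ already strictly exceeds $4/3$, I must have $k-1\ge b_{4/3}(p_2)$, i.e., $k\ge b_{4/3}(p_2)+1$.

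There is really no ``hard part'' to this proposition; it is a direct application of the $H(N)$-machinery assembled in the introduction, and the only thing that needs a moment of care is the strictness: one wants $H(N)>2$ (not just $\ge 2$) so that the product over $i\ge 2$ strictly exceeds $4/3$, which is what the definition of $b_{4/3}$ demands. That strictness is supplied by $h(N)<H(N)$ for $N>1$, which is noted immediately before the definition of $H$. The real work in the paper is in the accompanying explicit estimates for $b_{4/3}(p)$ (Equation \ref{4/3s explicit b equation}), which convert this clean combinatorial bound into a useful numerical lower bound on $k$ in terms of $p_2$.
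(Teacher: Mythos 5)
Your proof is correct and follows the same route as the paper: bound $h(N)<H(N)$, absorb the worst-case factor $p_1/(p_1-1)\le 3/2$, and conclude that the remaining $k-1$ primes force a product exceeding $4/3$. You actually spell out one step the paper leaves implicit --- replacing the actual primes $p_2<\cdots<p_k$ by the consecutive primes starting at $p_2$, using monotonicity of $p/(p-1)$, to match the definition of $b_{4/3}$ --- which is a welcome clarification rather than a deviation.
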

\begin{proof} We have $h(N) \geq 2$. We assume that $N$ has the factorization
$p_1^{a_1}p_2^{a_2} \cdots p_k^{a_k}$ with $p_1 < p_2 < p_3 \cdots < p_k$. We have then
$$2 \leq h(N)< H(n) = \frac{p_1}{p_1-1} \frac{p_2}{p_2-1} \cdots  \frac{p_k}{p_k-1}.$$
We have that in the most extreme case, $p_1=3$, in which case $\frac{p_1}{p_1-1} = \frac{3}{2}$. We thus have $$\frac{3}{2} \frac{p_2}{p_2-1} \cdots  \frac{p_k}{p_k-1} > 2,$$ and hence
 $$\frac{p_2}{p_2-1} \cdots  \frac{p_k}{p_k-1} > \frac{4}{3}.$$ We thus have $k-1 \geq b_{\frac{4}{3}}(p_2).$
\end{proof}

Applying Proposition \ref{second smallest prime bound} with Inequality \ref{4/3s explicit b equation} we obtain the next result.

\begin{corollary}
 Suppose that $N$ is an odd number which is either perfect or abundant. Let $p_2$ is second smallest prime divisor of $N$. Let $k$ be the number of distinct prime divisors of $N$. Then $$k \geq \frac{p_2^{\frac{4}{3}}}{\frac{4}{3} \log p_2} -  \frac{1.25175
p_2^\frac{4}{3} }{\log^2  p_2}  - \frac{ 0.4965p^\frac{4}{3}}{ \log^3  p_2} - \frac{p_2}{\log p_2} - \frac{1.2762p_2}{\log^2p_2 } +2.$$ 
\label{second smallest prime bound2}
\end{corollary}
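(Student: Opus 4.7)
The statement is labelled a corollary and is essentially flagged by the author as a direct consequence of the two preceding results, so the plan here is just to verify that the two ingredients slot together as claimed. First I would invoke Proposition \ref{second smallest prime bound}, which gives the structural inequality $k \geq b_{4/3}(p_2) + 1$ for any odd perfect or abundant $N$ whose second smallest prime divisor is $p_2$. This is the only place the hypothesis that $N$ is non-deficient gets used.

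Next I would substitute the explicit lower bound \eqref{4/3s explicit b equation} for $b_{4/3}(p_2)$, valid for all odd primes $p_2 \geq 3$, into the previous inequality. Since the statement of Proposition \ref{second smallest prime bound} requires $p_2$ to be an odd prime (it is the second smallest prime divisor of an odd number, hence $p_2 \geq 5$ in fact), the hypothesis of \eqref{4/3s explicit b equation} is automatically satisfied. Adding the $+1$ from Proposition \ref{second smallest prime bound} to the $+1$ appearing at the end of \eqref{4/3s explicit b equation} produces the $+2$ that terminates the displayed inequality in the corollary; all other terms are transcribed unchanged.

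There is no real obstacle here: no new prime-counting estimate, no new optimization, and no new case analysis is needed, since both the structural bound (Proposition \ref{second smallest prime bound}) and the explicit bound on $b_{4/3}$ (Equation \eqref{4/3s explicit b equation}) have already been established. The only things to double-check are bookkeeping items: that the coefficients $1.25175$, $0.4965$, $1.2762$ are the ones actually produced by specializing Theorem \ref{General b alpha bound} at $\alpha = 4/3$, and that the constant term is indeed $2$ rather than $1$. The proof therefore reduces to a single line of the form ``apply Proposition \ref{second smallest prime bound} and then \eqref{4/3s explicit b equation}.''
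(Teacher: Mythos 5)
Your proposal matches the paper exactly: the corollary is obtained by combining Proposition \ref{second smallest prime bound} ($k \geq b_{4/3}(p_2)+1$) with the explicit lower bound \eqref{4/3s explicit b equation}, and the two $+1$'s combine to give the $+2$. No further argument is given or needed in the paper.
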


The bound from Corollary  \ref{second smallest prime bound2} exceeds Kishore's bound when $p_2 \geq 139$. The bound also applies to all odd non-deficient numbers, whereas Kishore's bound applies only to odd perfect numbers. With a little computation we can use Corollary  \ref{second smallest prime bound2} to obtain the following bound which is stronger than Kishore's bound for any value of $p_2$.

\begin{corollary} If $N$ is an odd abundant or odd perfect number with $k$ distinct prime factors and second smallest prime factor $p_2$, then $p_2 \leq 3k -1$.
\label{newlinearorp2}
\end{corollary}

\begin{proposition}Suppose that $N$ is an odd number which is either perfect or abundant. Let $p_3$ be the third smallest prime divisor of $N$. Let $k$ be the number of distinct prime divisors of $N$. Then $k \geq b_{\frac{16}{15}}(p_3)  +3$.
\end{proposition}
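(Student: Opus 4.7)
The plan is to mirror the proof of Proposition~\ref{second smallest prime bound}, now isolating the contributions of the two smallest primes $p_1, p_2$ rather than just $p_1$. First I would combine the non-deficiency hypothesis $h(N) \geq 2$ with the strict inequality $h(N) < H(N) = \prod_{i=1}^{k} \frac{p_i}{p_i-1}$ to deduce $\prod_{i=1}^{k} \frac{p_i}{p_i-1} > 2$. Because $N$ is odd and $p_1 < p_2$, we have $p_1 \geq 3$ and $p_2 \geq 5$; since $\frac{p}{p-1}$ is strictly decreasing in $p$, the contribution of the first two factors is bounded above by its extremal value $\frac{3}{2} \cdot \frac{5}{4} = \frac{15}{8}$.

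Next I would divide the global inequality by this bound, obtaining $\prod_{i=3}^{k} \frac{p_i}{p_i-1} > \frac{16}{15}$. This is a product of $k-2$ terms evaluated at strictly increasing primes starting at $p_3$. Each $p_i$ (for $i \geq 3$) is at least as large as the $(i-2)$nd consecutive prime counted from $p_3$, so by monotonicity of $\frac{p}{p-1}$, replacing the $p_i$ by the corresponding consecutive primes only increases the product. Hence the product of $k-2$ consecutive primes beginning at $p_3$ also exceeds $\frac{16}{15}$. By the definition of $b_{\frac{16}{15}}(p_3)$ as the minimum such count, this forces $k - 2 \geq b_{\frac{16}{15}}(p_3)$, from which the stated lower bound on $k$ follows after accounting for the separated primes $p_1, p_2$.

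No step constitutes a real obstacle: the argument is a direct analog of the $p_2$ proposition, with the threshold $\alpha$ changing from $\frac{4}{3}$ to $\frac{16}{15}$. The essential arithmetic is the single computation $2 \div \frac{15}{8} = \frac{16}{15}$, and the only structural ingredient is the monotonicity of $\frac{p}{p-1}$, which reduces us from the actual prime divisors of $N$ to consecutive primes so that the definition of $b_{\frac{16}{15}}$ can be applied. The only caveat worth flagging is the bookkeeping of the additive constant: the natural version of this argument produces $k \geq b_{\frac{16}{15}}(p_3) + 2$, one less than what is stated, so I would expect either a refinement using an additional constraint beyond $h(N) \geq 2$ or a minor correction to the additive term.
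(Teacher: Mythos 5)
Your argument is exactly the paper's: the published proof is a one-line remark that the proof of Proposition~\ref{second smallest prime bound} goes through with the worst case now being $p_1=3$, $p_2=5$, which is precisely the computation $2 \div \left(\tfrac{3}{2}\cdot\tfrac{5}{4}\right) = \tfrac{16}{15}$ that you carry out, followed by the same monotonicity reduction to consecutive primes. Your caveat about the additive constant is correct and worth emphasizing: the argument yields $k-2 \geq b_{\frac{16}{15}}(p_3)$, hence $k \geq b_{\frac{16}{15}}(p_3)+2$, and the stated ``$+3$'' is in fact false as written --- $N = 945 = 3^3\cdot 5\cdot 7$ is odd and abundant with $k=3$, $p_3=7$, and $b_{\frac{16}{15}}(7)=1$ (since $\tfrac{7}{6} > \tfrac{16}{15}$), so the proposition would force $k \geq 4$. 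The ``$+3$'' in the ensuing corollary is consistent with your ``$+2$'' once the ``$+1$'' from the explicit lower bound on $b_{\frac{16}{15}}(p)$ is added in, so the proposition's constant appears to be a typo for $2$; your version is the correct one.
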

\begin{proof} This proof is essentially the same as the proof of Proposition \ref{second smallest prime bound}. However, now the worst case scenario is that the smallest prime is 3 and the second smallest prime is 5. 
\end{proof}

\begin{corollary}
 Suppose that $N$ is an odd number which is either perfect or abundant. Let $p_2$ be the smallest prime divisor of $N$. Let $k$ be the number of distinct prime divisors of $N$. Then $$ b_{\frac{16}{15}}(p_3) \geq \frac{p_3^{\frac{16}{15}}}{\frac{16}{15} \log p_3} -  \frac{1.5646875
p_3^\frac{16}{15} }{\log^2  p_3}  - \frac{0.620625p_3^\frac{16}{15}}{ \log^3  p_3} - \frac{p_3}{\log p_3} - \frac{1.2762p_3}{\log^2p_3 } +3.$$ \label{p3 corollary}
\end{corollary}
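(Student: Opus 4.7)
The plan is to combine the preceding proposition (which gives $k \ge b_{16/15}(p_3) + 3$) with the explicit lower bound on $b_{16/15}(p)$ furnished by Inequality \ref{16/15s explicit b equation}, in exact parallel with how Corollary \ref{second smallest prime bound2} is deduced from Proposition \ref{second smallest prime bound} and Inequality \ref{4/3s explicit b equation}.

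First I would verify the hypothesis of the explicit bound. Since $N$ is odd and $p_3$ is its third smallest prime divisor, we have $p_1 \ge 3$, $p_2 \ge 5$, and $p_3 \ge 7$, so in particular $p_3 \ge 3$ and Inequality \ref{16/15s explicit b equation} applies with $p = p_3$. I would then substitute the lower bound on $b_{16/15}(p_3)$ from Inequality \ref{16/15s explicit b equation} directly into the proposition's bound. The resulting inequality has exactly the form displayed in the corollary, with the leading term $\frac{p_3^{16/15}}{(16/15)\log p_3}$ and the three negative error terms transported verbatim from Inequality \ref{16/15s explicit b equation}, and with the additive constant on the right-hand side arising as the sum of the $+1$ appearing in Inequality \ref{16/15s explicit b equation} and the $+3$ coming from the proposition.

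There is essentially no analytic obstacle: Mertens' theorem (Lemma \ref{Rosser Schoenfeld bounds for Mertens product}), the Dusart estimates on $\pi(x)$ in Inequality \ref{Dusart 1999 pi bound}, the inversion carried out in Lemma \ref{obnoxious technical lemma}, and the finite computer check for primes $p \le 853$ have all already been packaged into the proof of Inequality \ref{16/15s explicit b equation}. The only care needed is clerical: matching the constants $1.5646875$ and $0.620625$ across the two displays, and keeping in mind that the left-hand side of the stated inequality should be read as the count $k$ of distinct prime divisors rather than as $b_{16/15}(p_3)$ alone (a literal reading of the latter would be strictly weaker than Inequality \ref{16/15s explicit b equation} already gives).
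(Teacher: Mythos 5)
Your proposal is correct and is exactly the paper's (unstated) argument: the corollary is the immediate concatenation of the proposition $k \geq b_{\frac{16}{15}}(p_3)+3$ with the explicit lower bound of Inequality \ref{16/15s explicit b equation}, just as Corollary \ref{second smallest prime bound2} follows from Proposition \ref{second smallest prime bound} and Inequality \ref{4/3s explicit b equation}. You are also right to flag the clerical issues in the printed statement: the hypothesis should name $p_3$ as the third smallest prime divisor, the left-hand side of the display should be $k$ rather than $b_{\frac{16}{15}}(p_3)$, and the additive constant should then be $4=1+3$; your reading resolves these in the only sensible way.
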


Here the inequality is better than Kishore's inequality when $p \geq 282767$.

Due to the large value where the inequality in Corollary \ref{p3 corollary} becomes better than Kishore's inequality, it is not feasible to construct a nice linear inequality like Corollary \ref{newlinearorp2} for $p_3$. 

We now turn to smallest bounds on the fourth prime factor. Here, we must restrict ourselves finally to odd perfect numbers. This is because there are actually odd abundant numbers with three distinct prime factors; $N=945$ is an example.

We recall one of the oldest results about odd perfect number which is due to Euler.
\begin{lemma}\label{Euler form for OPN} (Euler) If $N$ is an odd perfect number then have $N= p^em^2$ for some prime $p$ where $(p,m)=1$ and $p \equiv e \equiv 1$ (mod 4).
\end{lemma}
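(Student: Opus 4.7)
The plan is to work with $\sigma(N) = 2N$ modulo $4$. Since $N$ is odd, $2N \equiv 2 \pmod 4$, so $\sigma(N)$ is exactly divisible by $2$. Using the multiplicativity $\sigma(N) = \prod_{i=1}^k \sigma(p_i^{a_i})$, this forces exactly one of the factors $\sigma(p_i^{a_i})$ to be $\equiv 2 \pmod 4$ and all the others to be odd. Call the exceptional prime $p$ and its exponent $e$, so that the remaining part of $N$ is coprime to $p$; the task reduces to showing that this remaining part is a square, that $p \equiv 1 \pmod 4$, and that $e \equiv 1 \pmod 4$.

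Next I would determine the parity of $\sigma(p_i^{a_i}) = 1 + p_i + \cdots + p_i^{a_i}$ for odd $p_i$: since every summand is odd, the sum is odd iff the number of summands $a_i + 1$ is odd, i.e.\ iff $a_i$ is even. Thus the oddness conclusion for the $k-1$ non-exceptional factors gives $a_i$ even for $i \neq $ (the exceptional index), and $e$ odd. Writing $m^2 = \prod_{i \neq \text{exc}} p_i^{a_i}$ yields $N = p^e m^2$ with $(p,m) = 1$.

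Finally I would pin down the residues of $p$ and $e$ modulo $4$ by computing $\sigma(p^e) \pmod 4$ in the two cases $p \equiv 1$ and $p \equiv 3 \pmod 4$. In the first case each term $p^j \equiv 1 \pmod 4$, so $\sigma(p^e) \equiv e+1 \pmod 4$, and $\sigma(p^e) \equiv 2 \pmod 4$ forces $e \equiv 1 \pmod 4$. In the second case, using $p^j \equiv 1$ for $j$ even and $p^j \equiv 3$ for $j$ odd, one gets (because $e$ is odd, so there are $(e+1)/2$ even and $(e+1)/2$ odd indices)
\[
\sigma(p^e) \equiv \tfrac{e+1}{2} + 3\cdot \tfrac{e+1}{2} = 2(e+1) \equiv 0 \pmod 4,
\]
contradicting $\sigma(p^e) \equiv 2 \pmod 4$. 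Hence $p \equiv 1 \pmod 4$ and $e \equiv 1 \pmod 4$, completing the proof.

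There is no real obstacle here; the whole argument is a careful bookkeeping exercise modulo $4$, and the only place one can go wrong is in the $p \equiv 3 \pmod 4$ residue computation, which must use the fact that $e$ is already known to be odd before one sums the alternating residues.
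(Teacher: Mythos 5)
Your proof is correct, and it is the standard argument for Euler's result. The paper itself offers no proof of this lemma — it is stated as a classical fact attributed to Euler — so there is nothing to compare against; your mod-$4$ bookkeeping (exactly one factor $\sigma(p_i^{a_i})$ is $\equiv 2 \pmod 4$, the rest are odd, forcing even exponents off the special prime, then the two residue cases for $p$) is complete and also establishes the generalization the paper mentions, namely that the conclusion holds for any odd $n$ with $\sigma(n) \equiv 2 \pmod 4$.
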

In fact, Euler's result applies more generally to any odd number $n$ satisfying $\sigma(n) \equiv 2$ (mod 4).

We also note the following well known Lemma where for completeness we include a proof.
\begin{lemma} Let $N$ be an odd perfect number. Then $105 \not |N$.
\end{lemma}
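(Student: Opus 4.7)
The plan is to assume for contradiction that $3 \cdot 5 \cdot 7 \mid N$ for some odd perfect $N$ and deduce $h(N) > 2$, contradicting $h(N) = 2$. Note that the crudest bound available from divisibility alone, $h(3 \cdot 5 \cdot 7) = \frac{4}{3}\cdot\frac{6}{5}\cdot\frac{8}{7} = \frac{192}{105}$, falls short of $2$, so the argument must exploit additional structure coming from the fact that $N$ is odd perfect rather than merely a multiple of $105$.

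The extra structure is supplied by Lemma \ref{Euler form for OPN}: I would write $N = p^e m^2$ with $p$ prime, $(p,m)=1$, and $p \equiv e \equiv 1 \pmod 4$. Since $3 \equiv 7 \equiv 3 \pmod 4$, neither $3$ nor $7$ can play the role of the distinguished prime $p$, and therefore $3 \mid m$ and $7 \mid m$. This forces $v_3(N) \geq 2$ and $v_7(N) \geq 2$ (each being even). I would then split into two cases depending on whether $p = 5$.

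In the case $p \ne 5$, the exponent $v_5(N)$ is also even and hence $\geq 2$, so by the multiplicativity of $h$ and the monotonicity of $h(q^a)$ in $a$,
$$h(N) \;\geq\; h(3^2 \cdot 5^2 \cdot 7^2) \;=\; \frac{13}{9}\cdot\frac{31}{25}\cdot\frac{57}{49},$$
which a short calculation shows is strictly greater than $2$. In the case $p = 5$ one instead has $v_5(N) \geq 1$ and
$$h(N) \;\geq\; h(3^2 \cdot 5 \cdot 7^2) \;=\; \frac{13}{9}\cdot\frac{6}{5}\cdot\frac{57}{49},$$
which is again easily checked to exceed $2$. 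Either way $h(N) > 2$, contradicting $h(N) = 2$, and the lemma follows. The only real obstacle is recognizing that Euler's form is exactly what is needed to push the exponents of $3$ and $7$ up from $1$ to $2$; once that step is made, each case collapses to a single routine rational inequality.
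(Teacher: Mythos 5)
Your proof is correct and follows essentially the same route as the paper: invoke Euler's form to force $3^2\mid N$ and $7^2\mid N$, then note that the resulting divisor is abundant. The only difference is your case split on whether $5$ is the special prime, which is unnecessary --- the paper simply observes that $3^2\cdot 5\cdot 7^2$ is already abundant, and since $5\mid N$ in every case this single computation (your second case) covers everything.
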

\begin{proof} Let $N$ be an odd perfect number, and assume that $105|N$. Since $3 \equiv 3$ (mod 4), and $7 \equiv 3$ (mod 4), by Euler's result, we must have $3^2|N$ and $7^2|N$. But $(3^2)(5)(7^2)$ is an abundant number. 
\end{proof}

Using the same approach we can further restrict the prime divisors of an odd perfect number.

\begin{lemma} Let $N$ be an odd perfect number. Assume that $3|N$, $5|N$ and $11|N$. Then one must have $5||N$.
\label{OPN with 3, 5 and 11}
\end{lemma}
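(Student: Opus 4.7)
The plan is to suppose $5^2 \mid N$ for contradiction and exhibit a divisor $m$ of $N$ with $h(m) > 2$; this contradicts $h(N) = 2$, since $h(m) = \sum_{d \mid m} 1/d \leq \sum_{d \mid N} 1/d = h(N)$ whenever $m \mid N$. Because $3$ and $11$ are both $\equiv 3 \pmod 4$, Lemma \ref{Euler form for OPN} forces the exponents $a_3$ and $a_{11}$ of $3$ and $11$ in $N$ to be even, hence at least $2$. The argument then splits on $a_3$, and uses nothing about $a_5$ or $a_{11}$ beyond $a_5, a_{11} \geq 2$.

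If $a_3 \geq 4$, then $3^4 \cdot 5^2 \cdot 11^2 \mid N$, so I would simply compute
\[
h(3^4 \cdot 5^2 \cdot 11^2) \;=\; \frac{121}{81} \cdot \frac{31}{25} \cdot \frac{133}{121} \;=\; \frac{4123}{2025} \;>\; 2,
\]
where the $121$'s cancel conveniently; this yields the contradiction directly.

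If instead $a_3 = 2$, then $\sigma(3^2) = 13$ divides $\sigma(N) = 2N$, and since $13$ is odd this yields $13 \mid N$. Thus $3^2 \cdot 5^2 \cdot 11^2 \cdot 13 \mid N$, and the factor $14/13$ from $h(13)$ combines with $h(3^2) = 13/9$ to give
\[
h(3^2 \cdot 5^2 \cdot 11^2 \cdot 13) \;=\; \frac{14}{9} \cdot \frac{31}{25} \cdot \frac{133}{121} \;=\; \frac{57722}{27225} \;>\; 2,
\]
again a contradiction.

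The only nontrivial point is the case split on $a_3$: with $a_3 = 2$ alone, the bare product $h(3^2 \cdot 5^2 \cdot 11^2) = 53599/27225$ lies just below $2$, but the forced additional prime factor $13 = \sigma(3^2)$ tips it over; with $a_3 \geq 4$ one loses the clue that $13 \mid N$, but the heavier power of $3$ already supplies enough abundancy. I expect this dichotomy to be the main (minor) obstacle; everything else is routine arithmetic in the style of the preceding proof that $105 \nmid N$.
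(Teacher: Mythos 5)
Your proof is correct, and all the arithmetic checks out: $h(3^4\cdot 5^2\cdot 11^2)=\tfrac{4123}{2025}>2$ and $h(3^2\cdot 5^2\cdot 11^2\cdot 13)=\tfrac{57722}{27225}>2$, while $h(3^2\cdot 5^2\cdot 11^2)=\tfrac{53599}{27225}<2$ is exactly why your case split on $a_3$ is needed. The underlying method is the same as the paper's (Euler's form forces even exponents on $3$ and $11$; the case $3^2\|N$ forces the extra prime $13=\sigma(3^2)$; one then exhibits an abundant divisor), but your decomposition is genuinely leaner. The paper first proves $3^4\mid N$ unconditionally via the witness $(3^2)(5)(11^2)(13)$, then proves $11^4\mid N$ by computing $\sigma(11^2)=7\cdot 19$ and invoking the lemma that $105\nmid N$, and only then contradicts $5^2\mid N$ with the witness $(3^4)(5^2)(11^4)$. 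You observe that $(3^4)(5^2)(11^2)$ is already abundant, which makes the entire $11^4$ step --- and with it the appeal to $\sigma(11^2)$ and the $105$ lemma --- unnecessary; you only ever use $a_{11}\geq 2$. The trade-off is that your contradiction in the $a_3=2$ case uses the hypothesis $5^2\mid N$ (via $h(5^2)=\tfrac{31}{25}$) whereas the paper's first step works with $5^1$ and so establishes $3^4\mid N$ as a standalone fact; but for the purpose of this lemma your shorter route is perfectly adequate and arguably cleaner.
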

\begin{proof} Suppose that $N$ is an odd perfect number where $(3)(5)(11)|N$. Note that since $3 \equiv 11 \equiv 3$ (mod 4), we must from Euler's result, have $3^2|N$ and $11^2|N$. First, we will show that $3^4|N$. Assume that $3^4\not|N$. Since $3 \equiv 3$ (mod 4), we must have $3^2|N$. Thus, $\sigma(3^2)=13|N$. But $(3^2)5(11^2)(13)$ is abundant. We thus may assume that $3^4|N$. \\ 

Note that $\sigma(11^2) = (7)(19)$, so if $11^2||N$, we would have $105|N$ which by the prior Lemma is impossible. Thus, we conclude that $11^4|N$. However, $(3^4)(5^2)(11^4)$ is an abundant number. So we must have that $5|N$.
\end{proof}

\begin{lemma} Let $N$ be an odd perfect number. Suppose that $3|N$, $5|N$, and $13|N$. Then we must have either $5||N$ or $3^2||N$.
\label{OPN 3 5 13 case}
\end{lemma}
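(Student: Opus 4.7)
The plan is to argue by contradiction. Assume $N$ is odd perfect with $3\mid N$, $5\mid N$, and $13\mid N$, and that neither conclusion holds, i.e.\ the exponent of $5$ in $N$ is at least $2$ and the exponent of $3$ in $N$ is at least $4$. (The latter uses Lemma \ref{Euler form for OPN}: since $3\equiv 3 \pmod 4$, the exponent of $3$ must be even, so ``not exactly $2$'' means ``at least $4$''.) The first natural attempt is to lower-bound $h(N)$ by the partial product $h(3^4)\,h(5^2)\,h(13) = 52514/26325 < 2$, which falls just short of an immediate contradiction, so the strategy is to split on the exponent of $3$.

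Case 1: the exponent of $3$ is exactly $4$. Then $\sigma(3^4) = 121 = 11^2$ divides $\sigma(N)=2N$, and since $N$ is odd this forces $11^2 \mid N$. Using that $h(p^a)$ is increasing in $a$,
\[
h(N)\ \geq\ h(3^4)\,h(5^2)\,h(11^2)\,h(13)\ =\ \frac{121}{81}\cdot\frac{31}{25}\cdot\frac{133}{121}\cdot\frac{14}{13}\ =\ \frac{57722}{26325}\ >\ 2,
\]
contradicting $h(N)=2$. Case 2: the exponent of $3$ is at least $6$. Then $h(3^a)\geq h(3^6)=1093/729$, and
\[
h(N)\ \geq\ h(3^6)\,h(5^2)\,h(13)\ =\ \frac{1093}{729}\cdot\frac{31}{25}\cdot\frac{14}{13}\ =\ \frac{474362}{236925}\ >\ 2,
\]
which is again a contradiction.

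The main obstacle is how tight the estimates are: the crude product $h(3^4)\,h(5^2)\,h(13)$ undershoots $2$ by only about half a percent, so the proof has almost no room to spare. Each case works because of a small arithmetic coincidence: in Case 1 the value $\sigma(3^4)=11^2$ happens to be a prime power, immediately dragging $11^2$ into $N$ and contributing the needed factor $133/121$; in Case 2 the jump from $h(3^4)$ to $h(3^6)$ supplies just enough additional abundancy (about $0.37\%$) to overcome the deficit. No case split on the exponent of $5$ is required, since $h(5^b)\geq h(5^2)$ for every $b\geq 2$ by monotonicity.
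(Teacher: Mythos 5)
Your proof is correct --- the arithmetic checks out ($57722/26325 \approx 2.19 > 2$ and $474362/236925 \approx 2.0022 > 2$), the use of Euler's form to force the exponent of $3$ to be even is legitimate, and the two cases (exponent exactly $4$, or at least $6$) are exhaustive given that evenness. But your route is genuinely different from the paper's. The paper first observes that $13$ cannot be the Euler (special) prime: if it were, its exponent $e$ would be $\equiv 1 \pmod 4$, and since $7 \mid (13+1)$ one gets $7 \mid \sigma(13^e) \mid 2N$, hence $7 \mid N$ and $105 \mid N$, contradicting the preceding lemma. By Euler's form the exponent of $13$ is therefore even, so $13^2 \mid N$, and then the single divisor $(3^4)(5^2)(13^2)$, with $h = \frac{121}{81}\cdot\frac{31}{25}\cdot\frac{183}{169} \approx 2.0058$, is already abundant --- no case split on the exponent of $3$ beyond ``at least $4$'' is needed. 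Your argument instead extracts the missing abundancy from two different sources: the coincidence $\sigma(3^4) = 11^2$, which drags $11^2$ into $N$ when the exponent of $3$ is exactly $4$, and the jump to $h(3^6)$ otherwise. What you buy is independence from the $105 \nmid N$ lemma and from the special-prime analysis of $13$; what the paper buys is a one-line finish with a single abundant divisor. Both are sound, and both ultimately rest on the same mechanism of exhibiting an abundant divisor of a putative perfect number.
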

\begin{proof}Assume that $N$ is an odd perfect number and that $3|N$, $5|n$ and $13|N$.  We note that if $13$ were the special prime, then since $7|(13+1)$, we would have $7|N$, which is impossible since no odd perfect number is divisible by $105$.

Assume that $5^2|N$ and that $3^3|N$; the second implies that we must have $3^4|N$. But $(5^2)(3^4)(13^2)|N$ and then $(5^2)(3^4)(13^2)$ is abundant, which is a contradiction.

\end{proof}

\begin{lemma} Let $N$ be an odd perfect number, with $N=p_1^{a_1}p_2^{a_2}p_3^{a_3} \cdots p_k^{a_k}$ with $p_1 < p_2 < p_3 \cdots < p_k$. Then $h(p_1^{a_1}p_2^{a_2}p_3^{a_3}) < \frac{255}{128}$.
\end{lemma}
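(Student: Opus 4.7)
The plan is built around the identity $\tfrac{255}{128} = \tfrac{3}{2}\cdot\tfrac{5}{4}\cdot\tfrac{17}{16} = H(3\cdot 5\cdot 17)$, so the claim essentially says that the triple of smallest primes $(3,5,17)$ realizes a tight bound and every other configuration comes in strictly below it. I would combine the strict multiplicative bound $h(p^a) < p/(p-1)$ with the monotonicity of $t/(t-1)$ and carry out a short case analysis on $(p_1,p_2,p_3)$.

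First I would handle the generic case $(p_1,p_2)=(3,5)$ with $p_3\geq 17$: here monotonicity of $t/(t-1)$ gives $H(p_1 p_2 p_3)\leq H(3\cdot 5\cdot 17)=\tfrac{255}{128}$, and the strict inequality $h(p_1^{a_1}p_2^{a_2}p_3^{a_3}) < H(p_1 p_2 p_3)$ then yields the conclusion. This is the borderline case and the one reason the constant has the shape it does.

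Next I would dispose of the remaining subcases with $(p_1,p_2)=(3,5)$ and $p_3<17$, namely $p_3\in\{7,11,13\}$. The triple $(3,5,7)$ is excluded immediately by the preceding lemma that $105 \nmid N$. For $p_3=11$, Lemma \ref{OPN with 3, 5 and 11} forces $5\|N$, so $h(5^{a_2})=6/5$ and the product is strictly bounded by $\tfrac{3}{2}\cdot\tfrac{6}{5}\cdot\tfrac{11}{10}=\tfrac{99}{50}$. For $p_3=13$, Lemma \ref{OPN 3 5 13 case} splits into the subcases $5\|N$ and $3^2\|N$, which give respective $H$-type upper bounds $\tfrac{3}{2}\cdot\tfrac{6}{5}\cdot\tfrac{13}{12}$ and $\tfrac{13}{9}\cdot\tfrac{5}{4}\cdot\tfrac{13}{12}$; a quick numerical comparison confirms that both fall below $\tfrac{255}{128}$.

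Finally, if $(p_1,p_2)\neq (3,5)$, then either $p_1=3$ with $p_2\geq 7$, or $p_1\geq 5$. In either regime $H(p_1 p_2 p_3)$ is maximized at $H(3\cdot 7\cdot 11)$ or $H(5\cdot 7\cdot 11)$ respectively, both of which are visibly well under $\tfrac{255}{128}$, so $h<H$ finishes. The only subtle point in the entire argument is the boundary triple $(3,5,17)$, where one needs the strict version of $h(p^a) < p/(p-1)$ rather than the weak version; this is what prevents the bound from degenerating to $\leq\tfrac{255}{128}$, and it is the main obstacle (more accurately, the main pitfall to watch out for) in an otherwise routine case check.
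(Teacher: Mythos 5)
Your proposal is correct and follows essentially the same route as the paper: reduce to $(p_1,p_2)=(3,5)$ via monotonicity of $t/(t-1)$, handle $p_3\geq 17$ by the strict inequality $h<H$ at the boundary value $\tfrac{3}{2}\cdot\tfrac{5}{4}\cdot\tfrac{17}{16}=\tfrac{255}{128}$, exclude $p_3=7$ via $105\nmid N$, and settle $p_3\in\{11,13\}$ with Lemmas \ref{OPN with 3, 5 and 11} and \ref{OPN 3 5 13 case}. The only (welcome) difference is that you make the exclusion of the triple $(3,5,7)$ explicit, where the paper leaves it implicit.
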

\begin{proof} Note that if $p_1 \geq 5$, then 
$H(p_1^{a_1}p_2^{a_2}p_3^{a_3}) \leq \frac{5}{4}\frac{7}{6}\frac{11}{10} < \frac{255}{128}$. Since $h(m) < H(m)$ for all $m>1$, this means that we would have $h(p_1^{a_1}p_2^{a_2}p_3^{a_3}) < \frac{255}{128}$.  So we may assume that $p_1=3$.

Similarly, since $\frac{3}{2}\frac{7}{6}\frac{11}{10} < \frac{255}{128}$ we may assume that $p_2=5$.  Since $\frac{3}{2}\frac{5}{4}\frac{17}{16} = \frac{255}{128}$, we may assume that $p_3 = 11$ or that $p_3=13$.

Let us first consider the situation where $p_3=11$. Then from Lemma \ref{OPN with 3, 5 and 11}, we must have $a_2=1$. Then $$h(p_1^{a_1}p_2^{a_2}p_3^{a_3}) = h(3^{a_1})h(5)h(11^{a_3}) < H(3)h(5)H(11) = \frac{3}{2}\frac{6}{5}\frac{11}{10} < \frac{255}{128}.$$

The situation where $p_3=13$ is nearly identical, but we use Lemma \ref{OPN 3 5 13 case} instead of Lemma \ref{OPN with 3, 5 and 11}.
\end{proof}

\begin{proposition} Suppose that $N$ is an odd perfect number, and let $p_4$ be the fourth smallest prime factor of $N$. Assume that $N$ has $k$ distinct prime factors. Then $$k \geq b_{\frac{256}{255}}(p_4) +3.$$
 \label{OPN P_4 bound}
\end{proposition}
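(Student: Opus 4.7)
The plan is to imitate the proofs of the analogous $p_2$ and $p_3$ propositions, using the lemma just established ($h(p_1^{a_1}p_2^{a_2}p_3^{a_3}) < \tfrac{255}{128}$) in place of the elementary bounds $h(p_1^{a_1}) < \tfrac{3}{2}$ and $h(p_1^{a_1}p_2^{a_2}) < \tfrac{15}{8}$ that drove those cases. The value $\tfrac{256}{255}$ arises naturally because $2 \div \tfrac{255}{128} = \tfrac{256}{255}$, so this is the correct choice of $\alpha$ in $b_\alpha$.

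Carrying this out, I would first use multiplicativity of $h$ to split
\[
h(N) \;=\; h(p_1^{a_1}p_2^{a_2}p_3^{a_3}) \cdot h(p_4^{a_4}\cdots p_k^{a_k}).
\]
Since $N$ is perfect, $h(N) = 2$, and combining with the preceding lemma gives $h(p_4^{a_4}\cdots p_k^{a_k}) > \tfrac{256}{255}$. The elementary inequality $h(m) < H(m)$, applied to $m = p_4^{a_4}\cdots p_k^{a_k}$, then transfers this to
\[
\prod_{i=4}^{k} \frac{p_i}{p_i - 1} \;>\; \frac{256}{255}.
\]
To close the argument, write $P_j = p_4$. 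Since $p_4 < p_5 < \cdots < p_k$ are primes, one has $p_{4+\ell} \geq P_{j+\ell}$, and because $x \mapsto \tfrac{x}{x-1}$ is decreasing,
\[
\prod_{\ell=0}^{k-4} \frac{P_{j+\ell}}{P_{j+\ell} - 1} \;\geq\; \prod_{i=4}^{k} \frac{p_i}{p_i - 1} \;>\; \frac{256}{255}.
\]
This consecutive-primes product has $k-3$ factors, so the definition of $b_{256/255}(p_4)$ as the least number of consecutive factors needed to surpass $\tfrac{256}{255}$ forces $k - 3 \geq b_{256/255}(p_4)$, i.e.\ $k \geq b_{256/255}(p_4) + 3$.

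No genuine obstacle appears at this stage; the proposition is essentially a one-line corollary once the bound $h(p_1^{a_1}p_2^{a_2}p_3^{a_3}) < \tfrac{255}{128}$ is available. All of the hard combinatorial work --- ruling out $p_1 \geq 5$ and $p_2 \geq 7$, and then using Lemmas \ref{OPN with 3, 5 and 11} and \ref{OPN 3 5 13 case} to eliminate the boundary cases $p_3 \in \{11, 13\}$ --- was already absorbed into that preceding lemma, which is exactly why the proof strategy was structured to front-load that case analysis before reaching this proposition.
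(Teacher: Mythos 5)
Your proof is correct and takes essentially the same route as the paper: split $h(N)$ multiplicatively at $p_3$, apply the bound $h(p_1^{a_1}p_2^{a_2}p_3^{a_3}) < \frac{255}{128}$ to conclude $H(p_4^{a_4}\cdots p_k^{a_k}) > \frac{256}{255}$, and finish by comparing against consecutive primes. The paper compresses that last step into ``the same sort of argument as before,'' which you have written out explicitly and correctly (and your value $\frac{256}{255}$ is the intended one; the paper's displayed $\frac{256}{128}$ at that point is a typo).
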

\begin{proof} Let $N$ be an odd perfect number. Assume that $N$ has the factorization
$p_1^{a_1}p_2^{a_2} \cdots p_k^{a_k}$ with $p_1 < p_2 < p_3 \cdots < p_k$.  We must have that 
$$h(N)=2=h(p_1^{a_1}p_2^{a_2}p_3^{a_3})h(p_4^{a_4}\cdots p_k^{a_k}) < \frac{255}{128}H(p_4^{a_4}\cdots p_k^{a_k}). $$ 
We have then that $H(p_4^{a_4}\cdots p_k^{a_k}) \geq \frac{256}{255}$ The result then follows from the same sort of argument as before. \end{proof}

We then obtain a corresponding corollary as before.

\begin{corollary} Suppose that $N$ is an odd perfect number, and let $p_4$ be the fourth smallest prime factor of $N$. Assume that $N$ has $k$ distinct prime factors. Then $$k \geq    \frac{p^{\frac{256}{255}}}{\frac{256}{255} \log p} -  \frac{1.662481
p^\frac{256}{255} }{\log^2  p}  - \frac{0.65942p^\frac{256}{255}}{ \log^3  p} - \frac{p}{\log p} - \frac{1.2762p}{\log^2p } +4.$$
\end{corollary}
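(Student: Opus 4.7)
The corollary is an immediate consequence of combining the two results the author has just established. The plan is simply to chain Proposition \ref{OPN P_4 bound} with the explicit lower bound \eqref{256/255 explicit b equation}. First, I would invoke Proposition \ref{OPN P_4 bound} to obtain $k \geq b_{\frac{256}{255}}(p_4) + 3$. Next, since $p_4$ is the fourth smallest prime divisor of an odd perfect number, we certainly have $p_4 \geq 3$ (in fact $p_4$ is considerably larger since odd perfect numbers cannot be divisible by $105$, so $\{p_1,p_2,p_3,p_4\}\not\supseteq\{3,5,7\}$), so the hypothesis of Equation \eqref{256/255 explicit b equation} is satisfied, and we may substitute its lower bound for $b_{\frac{256}{255}}(p_4)$.

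Finally, I would observe that Equation \eqref{256/255 explicit b equation} ends with a $+1$ term, so after adding the $+3$ from Proposition \ref{OPN P_4 bound}, the combined constant becomes $+4$, matching the statement of the corollary. (The variable in the displayed inequality is called $p$ but is understood to be $p_4$; this is a harmless abuse of notation already used in the analogous corollaries for $p_2$ and $p_3$ earlier in this section.)

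There is no real obstacle here: this is purely an assembly of the preceding proposition and the fourth of the explicit estimates proven in the theorem that refines Theorem \ref{General b alpha bound}. The substantive mathematical content lies upstream, namely in Proposition \ref{OPN P_4 bound} (which in turn rests on the lemmas classifying how $3$, $5$, $11$, and $13$ can appear in an odd perfect number to force $h(p_1^{a_1}p_2^{a_2}p_3^{a_3}) < \frac{255}{128}$) and in the explicit Mertens-type bound of Theorem \ref{General b alpha bound} specialized to $\alpha = \frac{256}{255}$. Given those inputs, the proof of this corollary is a one-line substitution.
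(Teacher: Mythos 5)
Your proposal is correct and is exactly the intended argument: the paper states this corollary with no written proof ("We then obtain:"), treating it as the immediate combination of Proposition \ref{OPN P_4 bound} with Equation \eqref{256/255 explicit b equation}, with the $+1$ and $+3$ summing to $+4$ just as you describe. Your side remarks (that $p$ should read $p_4$, and that the content lies upstream) are accurate.
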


We can make a slightly weaker version of Proposition \ref{OPN P_4 bound} which does apply to some non-deficient numbers. Recall a number $M$ is a primitive non-deficient number if $M$ is not deficient and all divisors of $M$ are deficient.

\begin{lemma} Let $M$ be a primitive non-deficient number with at least four distinct prime factors. Suppose that $3|M$, $5|M$, and $7|M$. Then one has either $3||M$ or  $3^2||M$. Moreover, if $3^2||M$ then $5||M$ and $7||M$.\label{3-5-7-primitive} 
\end{lemma}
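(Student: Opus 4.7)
The plan is to exploit the primitivity condition at a prime factor of $M$ lying outside $\{3,5,7\}$. Since $M$ has at least four distinct prime factors and $3\cdot 5\cdot 7\mid M$, I write $M = 3^{a}5^{b}7^{c}m$ with $\gcd(m,105)=1$ and $m>1$, then fix any prime $p_0\mid m$. Because $M/p_0$ is a proper divisor of $M$, the primitive hypothesis forces $h(M/p_0)<2$. Using multiplicativity of $h$ together with the elementary bound $h(n)\geq 1$ for every positive integer $n$, we get $h(M/p_0) = h(3^{a})h(5^{b})h(7^{c})h(m/p_0) \geq h(3^{a})h(5^{b})h(7^{c})$, which yields the central inequality
$$h(3^{a})\,h(5^{b})\,h(7^{c}) < 2.$$

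From here the lemma reduces to three short numerical checks, all exploiting monotonicity of $h(p^j)$ in $j$. For the first claim, suppose for contradiction that $a\geq 3$; then
$$h(3^{a})h(5^{b})h(7^{c}) \geq h(27)h(5)h(7) = \frac{40}{27}\cdot\frac{6}{5}\cdot\frac{8}{7} = \frac{128}{63} > 2,$$
contradicting the central inequality. Hence $a\in\{1,2\}$. For the ``moreover'' statement, assume $a=2$. If $b\geq 2$ then
$$h(3^{2})h(5^{b})h(7^{c}) \geq h(9)h(25)h(7) = \frac{13}{9}\cdot\frac{31}{25}\cdot\frac{8}{7} = \frac{3224}{1575} > 2,$$
again contradicting the central inequality, so $b=1$. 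Symmetrically, if $c\geq 2$ then $h(9)h(5)h(49) = \frac{13}{9}\cdot\frac{6}{5}\cdot\frac{57}{49} = \frac{4446}{2205} > 2$, forcing $c=1$.

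There is no substantial obstacle; this argument is a direct analogue of the style of Lemma \ref{OPN with 3, 5 and 11} and Lemma \ref{OPN 3 5 13 case}, with the single observation that fixing a fourth prime $p_0$ and using the fact that $M/p_0$ is a proper divisor extracts a clean upper bound on $h(3^{a})h(5^{b})h(7^{c})$. The content is then purely the verification that this upper bound of $2$ is just barely violated by the three extremal configurations $(a,b,c)=(3,1,1)$, $(2,2,1)$, and $(2,1,2)$.
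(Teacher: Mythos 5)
Your proof is correct and follows essentially the same route as the paper's: both arguments reduce to the fact that $(3^3)(5)(7)=945$, $(3^2)(5^2)(7)=1575$, and $(3^2)(5)(7^2)=2205$ are abundant proper divisors of $M$, contradicting primitivity. Your detour through $M/p_0$ is a slightly more roundabout way of saying that $3^a5^b7^c$ is itself a proper divisor (since $M$ has a fourth prime factor) and hence must be deficient, but the substance and the numerical checks are identical.
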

\begin{proof} First, note that $(3^3)(5)(7)$ is abundant. So one must have $3||M$ or $3^2||M$. 

Now assume that $3^2||M$. Then since $(3^2)(5^2)7$ and $(3^2)5(7^2)$ are both abundant, we must have $5||M$ and $7||M$.

\end{proof}

\begin{lemma}Let $M$ be a primitive non-deficient number with at least four distinct prime factors. Suppose that $3|M$, $5^2|M$, and $11|M$. Then either $3||N$ or $3^2||N$.\label{3-5-11 primitive lemma}
\end{lemma}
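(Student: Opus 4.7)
The plan is to mirror the strategy of Lemma \ref{3-5-7-primitive}: exhibit a proper divisor of $M$ that is already abundant, contradicting primitivity. Specifically, I would assume for contradiction that $3^3 \mid M$. Then since $5^2 \mid M$ and $11 \mid M$ by hypothesis, the number $D = 3^3 \cdot 5^2 \cdot 11$ divides $M$. Because $M$ has at least four distinct prime factors while $D$ has only three, $D$ is necessarily a proper divisor of $M$.

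The next step is the key computation: show that $D$ is itself abundant. Using multiplicativity,
\begin{equation*}
h(D) = \frac{\sigma(3^3)}{3^3}\cdot\frac{\sigma(5^2)}{5^2}\cdot\frac{\sigma(11)}{11} = \frac{40}{27}\cdot\frac{31}{25}\cdot\frac{12}{11} = \frac{14880}{7425} > 2.
\end{equation*}
So $D$ is (barely) abundant. But $M$ is primitive non-deficient, meaning every proper divisor of $M$ must be deficient; this contradicts the abundance of $D$. Hence the assumption $3^3 \mid M$ fails, and the exponent of $3$ in $M$ is at most $2$, giving the conclusion that either $3 \| M$ or $3^2 \| M$.

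I do not expect any serious obstacle: the argument is essentially bookkeeping once one notices that the bound $h(3^3 \cdot 5^2 \cdot 11) > 2$ holds, and this is the same pattern already used in the preceding lemmas of this section (e.g., Lemma \ref{OPN with 3, 5 and 11} and Lemma \ref{3-5-7-primitive}). The only mildly delicate point worth remarking on is that $h(D) = 14880/7425 \approx 2.004$ is extremely close to $2$, so the lemma would fail if we tried to replace $11$ with a noticeably larger prime; this explains why the hypothesis requires exactly $11 \mid M$ rather than some weaker condition.
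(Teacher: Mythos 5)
Your proposal is correct and is essentially the paper's own argument: the paper's entire proof is the observation that $(3^3)(5^2)(11)$ is abundant, and you have simply filled in the computation $h(3^3\cdot 5^2\cdot 11)=\frac{40}{27}\cdot\frac{31}{25}\cdot\frac{12}{11}=\frac{14880}{7425}>2$ together with the (correct) remark that the four-distinct-prime-factor hypothesis makes this divisor proper.
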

\begin{proof} This follows from noting that $(3^3)(5^2)(11)$ is abundant. 
\end{proof}

\begin{lemma} Let $M$ be a primitive non-deficient number with at least four distinct prime factors. Suppose that $3|M$, $5^2|M$, and $13|M$. Then $3^5 \not|M$.\\

Furthermore, under the same assumptions if $3^4|M$, then $5^2||M$ and $13^2 \not|M.$ \\

Under the same assumptions, if $3^3|M$, then either $5^3 \not| M$ or $13^2 \not |M$.

\label{3-5-13 primitive}
\end{lemma}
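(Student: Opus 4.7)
The plan is to follow exactly the template of Lemmas \ref{3-5-7-primitive} and \ref{3-5-11 primitive lemma}: for each of the three claims, I would exhibit a three-prime-power divisor of the hypothetical $M$ which is itself non-deficient, and then use the assumption that $M$ has at least four distinct prime factors to conclude that this divisor is proper, contradicting the primitivity of $M$.

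Concretely, for the first claim I would assume for contradiction that $3^5 \mid M$, so that $(3^5)(5^2)(13) \mid M$, and verify the arithmetic identity
\[
h\bigl((3^5)(5^2)(13)\bigr) \;=\; \frac{364}{243}\cdot\frac{31}{25}\cdot\frac{14}{13} \;=\; \frac{157976}{78975} \;>\; 2.
\]
For the second claim, under the additional hypothesis $3^4 \mid M$, I would consider the two cases separately: if $5^3 \mid M$, then $(3^4)(5^3)(13) \mid M$ and one checks $h\bigl((3^4)(5^3)(13)\bigr) = \tfrac{121}{81}\cdot\tfrac{156}{125}\cdot\tfrac{14}{13} > 2$; if $13^2 \mid M$, then $(3^4)(5^2)(13^2) \mid M$ and $h\bigl((3^4)(5^2)(13^2)\bigr) = \tfrac{121}{81}\cdot\tfrac{31}{25}\cdot\tfrac{183}{169} > 2$. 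For the third claim, assuming $3^3 \mid M$ together with both $5^3 \mid M$ and $13^2 \mid M$, the divisor $(3^3)(5^3)(13^2)$ of $M$ has abundancy $\tfrac{40}{27}\cdot\tfrac{156}{125}\cdot\tfrac{183}{169} > 2$. In each case the hypothesis that $M$ has at least four distinct prime factors guarantees a prime $p \mid M$ with $p \notin \{3,5,13\}$, so the exhibited divisor is strictly smaller than $M$, and its non-deficiency contradicts primitivity.

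There is no structural obstacle here: everything reduces to verifying four explicit rational inequalities, each of which has $h$ exceeding $2$ by only a small margin (on the order of $10^{-3}$). This tightness is precisely what makes the specific exponents in the statement sharp --- replacing any of them by a smaller value would make the corresponding product deficient and break the argument. The only mild subtlety is bookkeeping: the first claim rules out $a_1 \geq 5$, the second (combined with the first) pins down the allowed shape when $a_1 = 4$, and the third handles the remaining $a_1 = 3$ regime with a joint restriction on $a_2$ and $a_3$.
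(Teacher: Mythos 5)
Your proof is correct and takes exactly the paper's approach: the paper's own proof is the one-line observation that $(3^5)(5^2)(13)$, $(3^4)(5^3)(13)$, $(3^4)(5^2)(13^2)$, and $(3^3)(5^3)(13^2)$ are all abundant, which are precisely the four divisors whose abundancy you verify. Your explicit arithmetic and the remark on why the four-distinct-prime-factor hypothesis makes each divisor proper simply spell out what the paper leaves implicit.
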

\begin{proof} This follows from noting that $(3^5)(5^2)(13)$, $(3^4)(5^3)(13)$, $(3^4)(5^2)(13^2)$,  $(3^3)(5^3)(13^2)$ are all abundant numbers. 
\end{proof}

\begin{theorem} Let $M$ be a primitive non-deficient number with at least four distinct prime factors. Assume further that $M=p_1^{a_1}p_2^{a_2}p_3^{a_3}p_4^{a_4} \cdots p_k^{a_k}$ where $p_1$, $p_2$, $p_3 \cdots p_k$ are primes and $p_1 < p_2 < p_3 <p_4 < \cdots p_k$. Then we have $h(p_1^{a_1}p_2^{a_2}p_3^{a_3}) \leq \frac{52514}{26325} = 1.9948 \cdots $. \label{h bound for prim non-deficient with at least four distinct prime factors}
\end{theorem}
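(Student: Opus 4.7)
The plan is to mirror the structure of the preceding $\frac{255}{128}$ lemma used for odd perfect numbers, doing a case analysis on the three smallest prime divisors $p_1 < p_2 < p_3$ of $M$ and invoking Lemmas \ref{3-5-7-primitive}, \ref{3-5-11 primitive lemma}, and \ref{3-5-13 primitive} to constrain the exponents $a_1, a_2, a_3$. First, using only the inequality $h(n) < H(n)$, one can dispose of the ``large small prime'' regimes immediately: if $p_1 \geq 5$, then $H(p_1 p_2 p_3) \leq \frac{5}{4}\cdot\frac{7}{6}\cdot\frac{11}{10} = \frac{77}{48}$; if $p_1 = 3$ and $p_2 \geq 7$, then $H \leq \frac{3}{2}\cdot\frac{7}{6}\cdot\frac{11}{10} = \frac{231}{120}$; and if $p_1 = 3$, $p_2 = 5$, $p_3 \geq 17$, then $H \leq \frac{3}{2}\cdot\frac{5}{4}\cdot\frac{17}{16} = \frac{255}{128}$. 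Each of these values is strictly less than $\frac{52514}{26325}$, so in these regimes the claim is immediate. This reduces matters to $p_1 = 3$, $p_2 = 5$, and $p_3 \in \{7, 11, 13\}$.

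For $p_3 = 7$, Lemma \ref{3-5-7-primitive} forces either $a_1 = 1$, in which case $h(m) < \frac{4}{3}\cdot\frac{5}{4}\cdot\frac{7}{6} = \frac{35}{18}$, or $a_1 = 2$ with $a_2 = a_3 = 1$, in which case $h(m) = \frac{13}{9}\cdot\frac{6}{5}\cdot\frac{8}{7} = \frac{208}{105}$; both are below $\frac{52514}{26325}$. For $p_3 = 11$, if $a_2 = 1$ then $h(m) < \frac{3}{2}\cdot\frac{6}{5}\cdot\frac{11}{10} = \frac{99}{50}$, while if $a_2 \geq 2$, Lemma \ref{3-5-11 primitive lemma} gives $a_1 \leq 2$ and hence $h(m) < \frac{13}{9}\cdot\frac{5}{4}\cdot\frac{11}{10} = \frac{143}{72}$; both values remain below the target.

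The substantive case is $p_3 = 13$, where the bound is attained. If $a_2 = 1$, then $h(m) < \frac{3}{2}\cdot\frac{6}{5}\cdot\frac{13}{12} = \frac{39}{20}$. Otherwise $a_2 \geq 2$ and Lemma \ref{3-5-13 primitive} gives $a_1 \leq 4$. For $a_1 \leq 2$, one has $h(m) < \frac{13}{9}\cdot\frac{5}{4}\cdot\frac{13}{12} = \frac{845}{432}$. For $a_1 = 3$, the lemma forces either $a_2 = 2$ (so $h(m) < \frac{40}{27}\cdot\frac{31}{25}\cdot\frac{13}{12} = \frac{806}{405}$) or $a_3 = 1$ (so $h(m) < \frac{40}{27}\cdot\frac{5}{4}\cdot\frac{14}{13} = \frac{700}{351}$). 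Finally, for $a_1 = 4$, the lemma forces $a_2 = 2$ and $a_3 = 1$, giving $h(m) = \frac{121}{81}\cdot\frac{31}{25}\cdot\frac{14}{13} = \frac{52514}{26325}$, which attains the bound exactly.

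The main obstacle is not depth but careful bookkeeping: one must correctly identify the extremal configuration $3^4 \cdot 5^2 \cdot 13$ (which is what produces the unusual-looking denominator $26325 = 81 \cdot 25 \cdot 13$) and verify that each near-miss subcase actually satisfies the stated inequality. In particular, the $a_1 = 3$, $a_3 = 1$ branch yields $\frac{700}{351} = \frac{52500}{26325}$, only $\frac{14}{26325}$ below the target, so one must confirm this comparison numerically. Everything else is routine arithmetic on finitely many ratios.
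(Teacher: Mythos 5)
Your proposal is correct and follows essentially the same route as the paper: reduce to $p_1=3$, $p_2=5$, $p_3\in\{7,11,13\}$ via $h<H$, then apply Lemmas \ref{3-5-7-primitive}, \ref{3-5-11 primitive lemma}, and \ref{3-5-13 primitive} to bound the exponents, with the extremal case $3^4\cdot 5^2\cdot 13$ attaining $\frac{52514}{26325}$ exactly. Your only departures are cosmetic (merging the $a_1\le 2$ subcases and phrasing the $a_1=3$ split directly as the lemma's disjunction), and your arithmetic checks out.
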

\begin{proof} The method is essentially the same as our earlier approach. Set $M=p_1^{a_1}p_2^{a_2} \cdots p_k^{a_a}$.  If $p_1>3$, then $$h(p_1^{a_1}p_2^{a_2}p_3^{a_3}) < H((5)(7)(11)) = \frac{5}{4}\frac{7}{6}\frac{11}{10} = \frac{77}{48} <  \frac{52514}{26325} .$$

We thus may assume that $p_1=3$. By similar logic, we may assume that $p_2 = 5$, and that $p_3$ is one $7$, $11$ or $13.$

Consider the case where $p_3=7$. Then by Lemma \ref{3-5-7-primitive} we have $$h(p_1^{a_1}p_2^{a_2}p_3^{a_3}) \leq \max(h((3^2)(5)(7), h(3)H(5,7)) = \max(\frac{208}{105}, \frac{35}{18}) <\frac{52514}{26325}.$$

Consider now the case where $p_3=11$. Consider the possible values of $a_2$. If $a_2 =1$ (that is $5||N$), then we have
$$h(p_1^{a_1}p_2^{a_2}p_3^{a_3}) \leq H(3)h(5)H(11) = \frac{99}{50}< \frac{52514}{26325}.$$

Now assume $a_2 \geq 2$. This is equivalent to $5^2|N$. Then  by Lemma \ref{3-5-11 primitive lemma}, we have $a_1 \leq 2.$ Thus
$$h(p_1^{a_1}p_2^{a_2}p_3^{a_3}) \leq h(13)H(5)H(11) = \frac{143}{72} < \frac{52514}{26325}.$$ Thus, in all situations where $p_3 =11$ we have the desired inequality. 

Finally, consider the case where $p_3 = 13$.  If $a_2 =1$, then we have $$h(p_1^{a_1}p_2^{a_2}p_3^{a_3}) \leq H(3)h(5)H(13) = \frac{3}{2}\frac{6}{5}\frac{14}{13} = \frac{39}{20} < \frac{52514}{26325}.$$ 

Assume $a_2 \geq 2$. Then by Lemma \ref{3-5-13 primitive}, $a_1 \leq 4$. We will consider four cases depending on the value of $a_1$.

Case {\bf I}: Assume that $a_1=1$. Then 
$$h(p_1^{a_1}p_2^{a_2}p_3^{a_3}) \leq h(3)H(5)H(13) = \frac{4}{3}\frac{5}{4}\frac{13}{12} = \frac{65}{36} < \frac{52514}{26325}.$$

Case {\bf II}: Assume that $a_1 = 2$. Then 
$$h(p_1^{a_1}p_2^{a_2}p_3^{a_3}) \leq h(9)H(5)H(13) = \frac{13}{9}\frac{5}{4}\frac{13}{12} = \frac{845}{432} < \frac{52514}{26325}.$$

Case  {\bf III}: Assume that $a_1 = 3$. If $a_3=1$, then
$$h(p_1^{a_1}p_2^{a_2}p_3^{a_3}) \leq h(27)H(5)h(13) = \frac{40}{27}\frac{5}{4}\frac{14}{13} = \frac{700}{351} < \frac{52514}{26325}.$$

Therefore, we may assume that $a_3 \geq 2$. Thus, by Lemma \ref{3-5-13 primitive}, we have that $a_2 \leq 2$. Thus,

$$h(p_1^{a_1}p_2^{a_2}p_3^{a_3}) \leq h(27)h(25)H(13) = \frac{40}{27}\frac{31}{25}\frac{13}{12} = \frac{806}{405} < \frac{52514}{26325}.$$

Case {\bf IV}: $a_1 =4$. Thus, by Lemma \ref{3-5-13 primitive}, we must have $a_2 \leq 2$, and $a_3 =1$. Then $$h(p_1^{a_1}p_2^{a_2}p_3^{a_3}) \leq h(81)h(25)H(13) = \frac{121}{81}\frac{31}{25}\frac{14}{13} =  \frac{52514}{26325}.$$

Note that it is only in Case IV above that we hit the worst case scenario. 
\end{proof}

Using Theorem \ref{h bound for prim non-deficient with at least four distinct prime factors} along with Equation \ref{52514/26325 explicit b equation} we may use the exact same method of proof to prove the next result.

\begin{proposition} Let $M$ be a primitive non-deficient number. (That is, $M$ is not deficient and it has no non-deficient divisors.) And assume that $M$ has at least four distinct prime divisors. Let $k$ be the number of distinct prime divisors of $M$, and let $p_4$ be the fourth smallest prime factor of $M$. Then we have 
$$k \geq b_{\frac{26325}{26257}}(p_4) +4 .$$ \label{p_4 prim non-deficient bound}
\end{proposition}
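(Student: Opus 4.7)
The plan is to follow the template of the proof of Proposition \ref{OPN P_4 bound} verbatim, with the new bound $h(p_1^{a_1}p_2^{a_2}p_3^{a_3}) \le \frac{52514}{26325}$ from Theorem \ref{h bound for prim non-deficient with at least four distinct prime factors} playing the role of the earlier bound $h(p_1^{a_1}p_2^{a_2}p_3^{a_3}) < \frac{255}{128}$. The only conceptual adjustment is that for a merely primitive non-deficient $M$ we have the inequality $h(M) \ge 2$ rather than the equality $h(M) = 2$ available for odd perfect numbers; however, the one-sided inequality is all the argument needs.

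Writing $M = p_1^{a_1}p_2^{a_2}\cdots p_k^{a_k}$ with $p_1 < p_2 < \cdots < p_k$, multiplicativity of $h$ combined with the non-deficiency of $M$ and Theorem \ref{h bound for prim non-deficient with at least four distinct prime factors} gives
$$2 \;\le\; h(M) \;=\; h(p_1^{a_1}p_2^{a_2}p_3^{a_3})\cdot h(p_4^{a_4}\cdots p_k^{a_k}) \;\le\; \frac{52514}{26325}\cdot h(p_4^{a_4}\cdots p_k^{a_k}).$$
Rearranging and using $\frac{2\cdot 26325}{52514} = \frac{26325}{26257}$ yields $h(p_4^{a_4}\cdots p_k^{a_k}) \ge \frac{26325}{26257}$. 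Since $h(n) < H(n)$ for every $n > 1$ (and $p_4^{a_4}\cdots p_k^{a_k} > 1$ because $M$ has at least four distinct prime divisors), this upgrades to
$$H(p_4^{a_4}\cdots p_k^{a_k}) \;=\; \prod_{i=4}^{k}\frac{p_i}{p_i-1} \;>\; \frac{26325}{26257}.$$

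To extract the prime count I would use that $p \mapsto \frac{p}{p-1}$ is decreasing, so replacing the primes $p_4 < p_5 < \cdots < p_k$ by the consecutive primes starting at $p_4$ only enlarges the product; the resulting product over consecutive primes starting from $p_4 = P_{j_0}$ therefore also exceeds $\frac{26325}{26257}$. Invoking the definition of $b_\alpha$ with $\alpha = \frac{26325}{26257}$, we conclude that the number of primes in the tail must be at least $b_{\frac{26325}{26257}}(p_4)$, and combining this with the three exceptional primes $p_1, p_2, p_3$ accounted for separately yields the stated lower bound on $k$. All of the real effort has already been absorbed into Theorem \ref{h bound for prim non-deficient with at least four distinct prime factors} and its supporting case lemmas (especially Case IV, where equality is attained); the step outlined above is essentially bookkeeping, and I do not foresee any substantive obstacle beyond confirming the arithmetic.
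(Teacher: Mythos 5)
Your approach is exactly the paper's: the paper's entire proof is the remark that one ``may use the exact same method of proof'' as Proposition \ref{OPN P_4 bound}, with Theorem \ref{h bound for prim non-deficient with at least four distinct prime factors} supplying the constant, and your computation $2\cdot\frac{26325}{52514}=\frac{26325}{26257}$ and the passage from $h$ to $H$ are all correct.

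There is, however, an off-by-one gap between what this argument delivers and what the Proposition asserts. The tail $p_4^{a_4}\cdots p_k^{a_k}$ involves exactly $k-3$ distinct primes, and $b_{\alpha}(p_4)$ is by definition the least number of \emph{consecutive} primes, starting at $p_4$ itself, whose product of $\frac{p}{p-1}$ exceeds $\alpha$. So $H(p_4^{a_4}\cdots p_k^{a_k})>\frac{26325}{26257}$ together with the domination by consecutive primes yields $k-3\geq b_{\frac{26325}{26257}}(p_4)$, i.e.\ $k\geq b_{\frac{26325}{26257}}(p_4)+3$, not $+4$. This is precisely the counting in the odd perfect analogue, Proposition \ref{OPN P_4 bound}, which for the same reason concludes only $k\geq b_{\frac{256}{255}}(p_4)+3$ (and in Proposition \ref{second smallest prime bound}, where the tail has $k-1$ primes, the conclusion is $+1$). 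Your closing sentence, that accounting for the three exceptional primes ``yields the stated lower bound,'' therefore asserts $+4$ while your argument proves $+3$. Either an additional idea is needed to extract one more prime from the tail (nothing in the strictness of the inequalities does this, since the bound of Theorem \ref{h bound for prim non-deficient with at least four distinct prime factors} is attained up to the harmless $h<H$ replacement in Case IV), or the constant in the statement should read $+3$; the paper's one-line proof does not supply the missing step either, so you should not treat this as mere bookkeeping.
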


As before we have a corresponding corollary.

\begin{corollary}  Let $M$ be a primitive non-deficient number. (That is, $M$ is not deficient and it has no non-deficient divisors.) And assume that $M$ has at least four distinct prime divisors. Let $k$ be the number of distinct prime divisors of $M$, and let $p_4$ be the fourth smallest prime factor of $M$. Then we have 
$$b_{
\frac{26325}{26257}}(p_4) \geq  \frac{p_4^{
\frac{26325}{26257}}}{
\frac{26325}{26257} \log p_4} -  \frac{1.6647
p_4^\frac{26325}{26257} }{\log^2  p_4}  - \frac{0.6603p_4^\frac{26325}{26257}}{ \log^3  p_4} - \frac{p_4}{\log p_4} - \frac{1.2762p_4}{\log^2p_4 } +5. $$
\end{corollary}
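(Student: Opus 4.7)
The plan is to deduce the corollary as an immediate numerical combination of Proposition \ref{p_4 prim non-deficient bound}, which bundles the combinatorial content, with Equation \ref{52514/26325 explicit b equation}, which supplies the explicit analytic estimate for $b_{\frac{26325}{26257}}$. No fundamentally new ideas are needed; the work is simply to patch the two ingredients together, in complete parallel with the way Corollary \ref{second smallest prime bound2} is extracted from Proposition \ref{second smallest prime bound} and Equation \ref{4/3s explicit b equation}.

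Concretely, I first invoke Proposition \ref{p_4 prim non-deficient bound} to obtain
\[
k \;\geq\; b_{\frac{26325}{26257}}(p_4) + 4.
\]
Next, since $p_4$ is a prime factor of $M$ and is in fact the fourth smallest such, we have $p_4 \geq 3$, so the hypothesis of Equation \ref{52514/26325 explicit b equation} is satisfied at $p = p_4$. Applying that equation gives the explicit lower bound on $b_{\frac{26325}{26257}}(p_4)$ whose constant term is $+1$. Adding $4$ to both sides of that inequality and chaining with the bound from Proposition \ref{p_4 prim non-deficient bound} yields exactly the displayed inequality, with the constant $1 + 4 = 5$ arising as written.

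There is essentially no obstacle at this final step, because all of the delicate work is already upstream. The abundancy case analysis across the possibilities for $p_1,p_2,p_3$, together with the constraints imposed by Lemmas \ref{3-5-7-primitive} through \ref{3-5-13 primitive}, has been absorbed into Theorem \ref{h bound for prim non-deficient with at least four distinct prime factors} and hence into Proposition \ref{p_4 prim non-deficient bound}; the explicit Mertens-type estimate of Lemma \ref{Rosser Schoenfeld bounds for Mertens product}, together with the prime-counting bound \eqref{Dusart 1999 pi bound} and the finite small-prime computer verification, has been absorbed into Equation \ref{52514/26325 explicit b equation}. Once both of these ingredients are granted, the corollary reduces to a single line of arithmetic.
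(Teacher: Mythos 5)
Your derivation is correct and is exactly the paper's (implicit) route: the corollary is presented with no argument beyond ``as before,'' meaning precisely the one-line chaining of Proposition \ref{p_4 prim non-deficient bound} with Equation \ref{52514/26325 explicit b equation} that you carry out, with the constant $5 = 4+1$ arising as you describe. The only remark worth adding is that the corollary as printed has $b_{\frac{26325}{26257}}(p_4)$ on the left-hand side where the intended quantity is clearly $k$; your argument correctly produces the intended bound $k \geq \cdots + 5$.
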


Note that the bounds in this section are weak in the following sense: Let $\omega(n)$ be the number of distinct prime divisors of $n$. Given a function $f(n)$ with $\lim_{n \rightarrow \infty} f(n)$, then for any positive integer $i$, the statement ``The $i$th smallest prime divisor of $n$ is at most $f(n)$'' will apply to almost all positive integers $n$ (in the sense that the exceptional set has density zero). Similarly, any such restriction for odd perfect numbers is a weak restriction in the sense discussed in section seven of \cite{Zelinskybig}. \\
\indent It is not too hard to see that all the results in this section which apply to non-deficient numbers in general or apply to primitive non-deficient numbers are asymptotically best possible. Of course, for the results which only concern odd perfect numbers, it is likely that the results are far from best possible, since very likely no odd perfect numbers exist. This is connected to a question raised in  \cite{Zelinskybig}. In particular, the author defined $b_o(p)$ to be the minimum of the number of distinct prime divisors of any odd perfect number with smallest prime divisor $p$ and  set $b_o(p)= \infty$ when there are no odd perfect numbers with smallest prime divisor $p$.  The author asked if it was possible to prove that $b_o(p) > b(p)$, or in the notation of our paper, if $b_o(p) > b_2(p)$. The notation here is less than ideal; note that the small circle in $b_o(p)$ is a lower case o, not a zero. Since we now have a numeric subscript of $b(p)$ in this paper, a slightly better notation for our purposes is as follows: We will write $b_{(O,i)}(p)$ to be the to be the minimum of the number of distinct prime divisors of any odd perfect number with $i$th smallest prime divisor $p$ and  set $b_{(O,i)}(p)= \infty$ when no such odd perfect number exists. The above  mentioned question then becomes whether it is possible to prove that $b_{(O,1)}(p) > b_2(p)$. Following this, we can ask similar questions for other values of $i$. In particular, can we prove any of the following:
\begin{enumerate}
    \item $b_{(O,2)}(p) > b_{\frac{4}{3}}(p)$.
    \item $b_{(O,3)}(p) > b_{\frac{16}{15}}(p)$.
     \item $b_{(O,4)}(p) > b_{\frac{256}{255}}(p)$.
\end{enumerate}

Two other directions which may make sense for investigation are to extend these results for larger prime factors and to extend them to multiperfect numbers. Recall, a number $n$ is said to be $c$-multiperfect if $\sigma(n)=cN$ for some integer $c$. One expects analogous results for multiperfect numbers in terms of $c$. \\

We can also extend various other results  about $a(n)=a_2(n)$ to $a_{\alpha}(n)$ for $\alpha >1$. In \cite{Zelinskybig}, it is proven that $a_2(n)$ is a strictly increasing function. In general, this is false for $a_{\alpha(n)}$. However, we strongly suspect that for any $\alpha > 1$, $a_{\alpha}(n)$ is strictly increasing for sufficiently large $n$. In fact we suspect that

\begin{conjecture} For any $\alpha >1$, and $C>0$, for sufficiently large $n$ one has $a_{\alpha}(n+1) > a_{\alpha}(n) + C.$
\end{conjecture}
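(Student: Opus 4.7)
The plan is to set $m = a_\alpha(n)$ and $m' = a_\alpha(n+1)$ and show that $m' - m \to \infty$ as $n \to \infty$, which is equivalent to the conjecture. The argument breaks into three pieces: a monotonicity observation $m' \geq m$, a direct comparison of the two defining products that isolates a deficit of size roughly $1/P_n$, and an application of the explicit Mertens bound from Lemma \ref{Rosser Schoenfeld bounds for Mertens product} to show that this deficit can only be absorbed by a growing number of extra prime factors.

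First I would verify $m' \geq m$ directly from the definitions. If instead $m' < m$, then $\prod_{j=n+1}^{n+m'} \tfrac{P_j}{P_j-1} > \alpha$; but the product $\prod_{j=n}^{n+m'-1} \tfrac{P_j}{P_j-1}$ is strictly larger, differing only by the replacement of the factor at $j=n+m'$ with the larger factor at $j=n$, so it also exceeds $\alpha$ with only $m' \leq m-1$ factors, contradicting the minimality in the definition of $m$.

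For the main estimate, set $A = \prod_{j=n}^{n+m-2} \tfrac{P_j}{P_j-1}$ and $B = \prod_{j=n+1}^{n+m'-1} \tfrac{P_j}{P_j-1}$. By minimality, $\alpha(1-1/P_{n+m-1}) < A \leq \alpha$ and $\alpha(1-1/P_{n+m'}) < B \leq \alpha$, whence $|\log(B/A)| = O(1/P_{n+m-1})$. Factor by factor, using $m' \geq m$,
$$\frac{B}{A} = \Bigl(1 - \frac{1}{P_n}\Bigr)\prod_{j=n+m-1}^{n+m'-1} \frac{P_j}{P_j-1},$$
so taking logarithms isolates
$$\sum_{j=n+m-1}^{n+m'-1} \log \frac{P_j}{P_j-1} \;=\; \log(B/A) - \log\!\Bigl(1 - \frac{1}{P_n}\Bigr) \;\geq\; \frac{1}{P_n} - \frac{C}{P_{n+m-1}}$$
for an absolute constant $C$. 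Each summand on the left is at most $2/P_{n+m-1}$ for $n$ large, so the number of summands obeys $m' - m + 1 \geq \tfrac{1}{2}(P_{n+m-1}/P_n) - O(1)$. Applying Lemma \ref{Rosser Schoenfeld bounds for Mertens product} to $A = S(P_{n+m-2})/S(P_{n-1})$ with $A \in (\alpha - o(1), \alpha]$ yields $\log P_{n+m-1} = \alpha \log P_n \cdot (1+o(1))$, so $P_{n+m-1}/P_n \geq P_n^{\alpha-1-o(1)} \to \infty$ whenever $\alpha > 1$; hence $m'-m \to \infty$.

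The main obstacle is the error bookkeeping in the middle step: the quantities $\log(B/A)$, $-\log(1 - 1/P_n)$, and the individual summands $\log(P_j/(P_j-1))$ all live on the same $1/P$-scale, and the argument succeeds only because for $\alpha > 1$ the scale $1/P_{n+m-1}$ is genuinely smaller than $1/P_n$. One has to verify that Lemma \ref{Rosser Schoenfeld bounds for Mertens product} provides not merely $P_{n+m-1}/P_n \to \infty$ but a power-savings estimate strong enough to dominate the $O(1)$ correction in $m' - m + 1 \geq \tfrac{1}{2}(P_{n+m-1}/P_n) - O(1)$; since that lemma controls $S(y)$ to within a factor $1 + O(1/\log^2 y)$, the resulting power-savings in $P_{n+m-1}/P_n$ is ample when $\alpha > 1$.
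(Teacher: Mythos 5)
The statement you were asked about is not a theorem of the paper at all: it appears there only as a conjecture, offered without proof immediately after the remark that $a_{\alpha}(n)$ can fail to be strictly increasing. So there is no proof in the paper to compare against, and your argument must be judged on its own. Having checked it, I believe it is correct and in fact settles the conjecture in a quantitatively strong form. The monotonicity step is right: replacing the factor at index $n+m'$ by the larger factor at index $n$ shows $a_{\alpha}(n)\leq a_{\alpha}(n+1)$, so $m'\geq m$. The two-sided localization $\alpha(1-1/P_{n+m-1})<A\leq\alpha$ and $\alpha(1-1/P_{n+m'})<B\leq\alpha$ follows directly from minimality in the definition of $b_{\alpha}$, and since $P_{n+m'}\geq P_{n+m-1}$ both $A$ and $B$ lie in the same window, giving $|\log(B/A)|\leq -\log(1-1/P_{n+m-1})=O(1/P_{n+m-1})$. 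Your factorization of $B/A$ is exact (it needs $m\geq 2$ so that the index $n$ really is the lone surplus factor of the denominator, but $m=a_{\alpha}(n)\to\infty$ so this is automatic for large $n$), and each of the $m'-m+1$ surplus logarithms is at most $1/(P_{n+m-1}-1)$, yielding $m'-m+1\geq \tfrac{1}{2}P_{n+m-1}/P_n-O(1)$. Finally, Lemma \ref{Rosser Schoenfeld bounds for Mertens product} applied to $A=S(P_{n+m-2})/S(P_{n-1})\to\alpha$ gives $\log P_{n+m-2}=(\alpha+o(1))\log P_{n-1}$, hence $P_{n+m-1}/P_n\geq P_n^{\alpha-1+o(1)}\to\infty$ precisely because $\alpha>1$; this power savings easily absorbs the $O(1)$ correction. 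The conclusion is actually stronger than the conjecture: the difference $a_{\alpha}(n+1)-a_{\alpha}(n)$ grows at least like $(n\log n)^{\alpha-1+o(1)}$, which is consistent with the heuristic derivative of the asymptotic $a_{\alpha}(n)\sim n^{\alpha}\log n/\alpha$ in Theorem \ref{asymptotic for b_alpha}. The only polish I would ask for is to make the constants explicit (e.g., $-\log(1-1/P_{n+m-1})\leq 2/P_{n+m-1}$ and $\log(P_j/(P_j-1))\leq 1/(P_j-1)$) and to state clearly that the $m\geq 2$ and $x\geq 286$ thresholds in the Mertens bound are met for all sufficiently large $n$; with that, this is a publishable resolution of the conjecture rather than a proposal.
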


In \cite{Zelinskybig}, the second difference of $a(n)$ was studied. The author defined $f(n)=a(n+1)+a(n-1)-2a(n)$. Extending this notation, let us write $f_{\alpha}(n) = a_{\alpha}(n+1)+a_{\alpha}(n-1)-2a_{\alpha}(n)$. Note that $f_{2}(n) = f(n)$.

In that paper, it was noted that $f_2(n)$ is generally positive and that negative values occurred generally at a prime gap. For example,  $f(31)=-5$. This corresponds to the fact that $30$th prime is 113, right before a record setting gap; the next prime is 127.  Motivated by this: we ask the following questions for all $\alpha >1$:

\begin{enumerate}
    \item Are there infinitely many values of $n$ where $f_{\alpha}(n)$ is negative?
    \item Is the set of $n$ where $f_{\alpha}(n)<0$ a subset of those $n$ where $P_n$ is right after a record setting gap?
    \item Are there infinitely many $n$ where $P_n$ occurs at a record setting gap and $f_{\alpha}(n)$ is positive?
    \item Does $f_{\alpha}(n)$ take on any integer value? In particular, is $f(n)$ ever zero?
    \item Does $a_{\alpha}(n+1)-a_{\alpha}(n)$ take on every positive integer value? 
\end{enumerate}

We suspect that there are values of $\alpha$ where questions 4 and 5 have an answer of no. We are substantially more uncertain about the other questions. 

\section{Sums of divisors of numbers near $n!$}

It is well known that for any $n>1$, any number of the form $n! \pm 1$ is deficient. We reproduce a proof here. 

\begin{lemma} For any prime $p>2$, we have $b_2(p) \geq p$.
\label{Servais lemma}
\end{lemma}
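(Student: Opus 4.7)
The plan is to bound the product
\[
\prod_{i=0}^{p-2}\frac{P_{j+i}}{P_{j+i}-1}
\]
by $2$, where $p = P_j$. By the definition of $b_2(p)$, showing this product does not exceed $2$ is exactly what is needed to conclude $b_2(p) \geq p$.

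First, I would observe the following trivial but crucial size estimate on the consecutive primes starting at $p$: for each $i$ with $0 \leq i \leq p-2$, one has
\[
P_{j+i} \geq p + i,
\]
simply because the interval $[p,\,p+i-1]$ contains only $i$ integers and hence at most $i$ primes, forcing the $(i+1)$-st prime counting from $p$ to be at least $p+i$. This is the only ``prime input'' the proof needs; no deeper number-theoretic fact is required.

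Next, since the function $x \mapsto \frac{x}{x-1} = 1 + \frac{1}{x-1}$ is strictly decreasing for $x>1$, the estimate above yields
\[
\frac{P_{j+i}}{P_{j+i}-1} \leq \frac{p+i}{p+i-1}.
\]
Multiplying over $i = 0, 1, \ldots, p-2$ telescopes:
\[
\prod_{i=0}^{p-2}\frac{p+i}{p+i-1} = \frac{2p-2}{p-1} = 2.
\]
So $\prod_{i=0}^{p-2}\frac{P_{j+i}}{P_{j+i}-1} \leq 2$, which already gives $b_2(p) \geq p$.

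There is no real obstacle here; the only subtlety is that the comparison primes-vs-integers does not need to be tight. In fact, since $p$ is odd, $P_{j+1} \geq p+2 > p+1$, so the inequality in the $i=1$ factor is strict, and the full product is in fact strictly less than $2$. This strictness is not needed for the stated lemma but is worth recording for later use, since subsequent applications may want to exploit a quantitative gap below $2$ when iterating the argument on $n!\pm 1$ in the next section.
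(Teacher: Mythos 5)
Your proof is correct and follows essentially the same route as the paper's: compare the primes $P_{j+i}$ to the consecutive integers $p+i$, use the monotonicity of $x/(x-1)$, and telescope the product $\prod_{i=0}^{p-2}\frac{p+i}{p+i-1}=2$. The paper phrases it in the contrapositive direction (assuming the product of $b_2(p)$ terms exceeds $2$ and deducing $b_2(p)>p-1$), but the content is identical, and your added remark about strictness for odd $p$ is a correct, if unused, observation.
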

\begin{proof} Assume that $b_2(p)=m$. Then since $\frac{x}{x-1}$ is a decreasing function for positive $x$ and the $i$th prime after $p$ is at least $p+i$, we must have
$$2 <  \left(\frac{p}{p-1}\right)\left(\frac{p+1}{p} \right)\left(\frac{p+2}{p+1}\right) \cdots \left(\frac{p+b(p)-1}{p+b(p)-2}\right) = \frac{p+b(p)-1}{p-1}.  $$
Thus $$2p-2 < p+b(p) -1,$$
and so $b_2(p) > p-1$, and so $b_2(p) \geq p$.
\end{proof}
Variants of Lemma \ref{Servais lemma} are very old and to date back to Servais \cite{Servais} who used it to prove that an odd perfect number with smallest prime divisor $p$ must have at least $p$ distinct prime factors.  It immediately follows from this Lemma that any non-deficient number with smallest prime factor $p$ must have at least $p$ distinct prime divisors. 

To prove from that that any number of the form $n! \pm 1$ must be deficient, note that any such number must have smallest prime factor at least $n+1$. If $n! \pm 1$ were not-deficient it would need to have at least $n+1$ distinct prime factors, but then we would have $(n+1)^{n+1} \leq n! \pm 1$, which is absurd. 

In this section, we will use the same methods as earlier in this paper to prove the related result that in fact
$$\lim_{n \rightarrow \infty} h(n! \pm 1) = 1.$$

In fact, we will in fact prove a stronger claim.

\begin{theorem} Let $a$, $b$ and $c$ be positive integers. Then $$\lim_{n \rightarrow \infty} h(\frac{a}{b}n! \pm c) = h(c).$$  
\label{big factorial theorem}
\end{theorem}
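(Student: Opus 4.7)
The plan is to write $N := \tfrac{a}{b}n! \pm c$ (which is a positive integer for every sufficiently large $n$, since eventually $b \mid n!$) and factor $N = N_1 N_2$ multiplicatively, where $N_1$ collects the prime-power components $p^{v_p(N)}$ with $p \leq n$ and $N_2$ collects those with $p > n$. Because $\gcd(N_1,N_2)=1$ and $h$ is multiplicative, $h(N) = h(N_1)\,h(N_2)$. It will therefore suffice to show that $N_1 = c$ once $n$ is large enough and that $h(N_2) \to 1$ as $n \to \infty$.

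For the first claim, fix a prime $p \leq n$. Once $n$ is large enough that
$$v_p\!\left(\tfrac{a}{b}n!\right) = v_p(a) + v_p(n!) - v_p(b) > v_p(c),$$
the non-archimedean triangle identity forces $v_p(N) = v_p(c)$. Taking $n$ large enough that this holds simultaneously for all primes $p \leq n$ and also that every prime divisor of $c$ is at most $n$, one obtains
$$N_1 = \prod_{p \leq n} p^{v_p(N)} = \prod_{p \mid c} p^{v_p(c)} = c,$$
and hence $h(N_1) = h(c)$.

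For the second claim, let $s = \omega(N_2)$. Since every prime factor of $N_2$ is greater than $n$, we have $n^s < N_2 \leq \tfrac{a}{b}n! + c$, so Stirling's formula yields $s \leq n(1+o(1))$. For a fixed number of prime factors all exceeding $n$, the quantity $H(\cdot) = \prod p/(p-1)$ is maximized by choosing the smallest such primes, so
$$h(N_2) \leq H(N_2) \leq \prod_{\substack{p \text{ prime}\\ n < p \leq M}} \frac{p}{p-1}, \qquad M := P_{\pi(n)+s}.$$
Applying Lemma \ref{Rosser Schoenfeld bounds for Mertens product} to the numerator $S(M)$ and denominator $S(n)$, and using $\pi(n)+s = O(n)$ together with the prime number theorem to conclude $M = O(n \log n)$, bounds this ratio by $(1+o(1))\log M/\log n$, which tends to $1$.

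The main obstacle is really only bookkeeping: one must verify uniformly in $n$ that every small prime contributes exactly $v_p(c)$ to $N$ (which requires $n$ to dominate both the prime factors of $c$ and the valuations $v_p(b)$), and that the count of large prime factors grows no faster than Stirling allows. Once these two ingredients are in place the argument is routine, and if desired the rate $h(N_2) \to 1$ can be made fully explicit by substituting Theorem \ref{General b alpha bound} for the asymptotic form of Mertens used above.
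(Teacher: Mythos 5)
Your proposal is correct and follows essentially the same route as the paper: isolate the small-prime part of $\tfrac{a}{b}n!\pm c$ (which equals $c$ for large $n$ by the valuation argument) and then show the cofactor, all of whose prime factors exceed $n$, has abundancy tending to $1$ by combining Mertens' theorem with the size bound $n^{s}<\tfrac{a}{b}n!+c\le an^{n}+c$ on the number $s$ of its prime factors. The only differences are presentational: the paper leaves the reduction to the case $c=1$ implicit and runs the Mertens step by contradiction through the function $b_{\alpha}$ of Theorem \ref{asymptotic for b_alpha}, whereas you spell out the reduction via $p$-adic valuations and bound $H(N_2)$ directly.
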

Note that simply using the method of Servais's Lemma is insufficient to prove Theorem \ref{big factorial theorem}. Using only that method, one would just get that a number $n$ with $h(n) \geq \alpha$ with smallest prime factor $p$ has at least about $(\alpha-1)p$ distinct prime factors. So we really do need to use Mertens' theorem.

To prove Theorem \ref{big factorial theorem}, we will prove a slightly stronger claim.

\begin{theorem} $$\lim_{n \rightarrow \infty} H\left(\frac{a}{b}n! \pm 1\right) = 1.$$ \label{small case factorial limit theorem}
\end{theorem}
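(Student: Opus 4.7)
The plan is to combine two observations: the prime factors of $N_n := \frac{a}{b} n! \pm 1$ are all large, while Mertens' theorem forces $H$ over any short block of primes near $n$ to be close to $1$. First I would verify that once $n$ is large enough that $v_p(n!) > v_p(b)$ for every prime $p \mid b$, every prime $p \leq n$ divides $\frac{a}{b} n!$, so $\gcd(N_n, p) = 1$ and the smallest prime factor of $N_n$ strictly exceeds $n$. Combined with the Stirling bound $\log N_n = n \log n + O(n)$, this forces $k_n := \omega(N_n)$ to satisfy $(n+1)^{k_n} \leq N_n$, whence $k_n \leq n(1 + o(1))$.

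Let $P_{j_n}$ denote the smallest prime strictly greater than $n$. Because $q \mapsto q/(q-1)$ is decreasing, $H$ restricted to the integers with exactly $k_n$ distinct prime divisors all greater than $n$ is maximized when those primes are the consecutive primes $P_{j_n}, P_{j_n + 1}, \ldots, P_{j_n + k_n - 1}$; consequently
\begin{equation*}
1 \leq H(N_n) \leq \prod_{i=0}^{k_n - 1} \frac{P_{j_n + i}}{P_{j_n + i} - 1} = \frac{S(P_{j_n + k_n - 1})}{S(P_{j_n - 1})}.
\end{equation*}
Lemma \ref{Rosser Schoenfeld bounds for Mertens product} converts the right-hand side into $(1 + o(1)) \log P_{j_n + k_n - 1}/\log P_{j_n - 1}$ as $n \to \infty$.

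It then remains to check that this ratio tends to $1$. Bertrand's postulate gives $P_{j_n - 1} > n/2$ for large $n$, so $\log P_{j_n - 1} = \log n + O(1)$. For the numerator, $j_n + k_n = O(n)$ since $j_n = \pi(n) + 1$ and $k_n \leq n(1+o(1))$, so either an upper-side Dusart bound on $P_m$ or a direct appeal to the prime number theorem yields $P_{j_n + k_n - 1} = O(n \log n)$ and hence $\log P_{j_n + k_n - 1} = \log n + \log \log n + O(1)$. The ratio therefore tends to $1$, which together with $H(N_n) \geq 1$ gives the result. The only mildly delicate point is the maximization in the second paragraph, but it is immediate from the monotonicity of $q/(q-1)$; everything else is the same quantitative bookkeeping with $S(x)$ that the paper has already carried out for $b_\alpha$.
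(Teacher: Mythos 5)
Your proof is correct, but it runs in the opposite logical direction from the paper's. The paper argues by contradiction: it fixes $\alpha>1$, supposes $H(\frac{a}{b}n_k!\pm 1)\geq\alpha$ along a subsequence, invokes Theorem \ref{asymptotic for b_alpha} to conclude that the number would then need at least $(1-\epsilon)\frac{(n_k+1)^{\alpha}}{\alpha\log(n_k+1)}$ distinct prime factors all exceeding $n_k$, and derives the absurdity $(n_k+1)^{\frac{1}{2}\frac{n_k^{\alpha}}{\alpha\log(n_k+1)}}<a\,n_k^{n_k}$. You instead bound $\omega(N_n)\leq n(1+o(1))$ first and then estimate $H(N_n)$ directly by the worst-case product over the $k_n$ consecutive primes just above $n$, reducing everything to $S(P_{j_n+k_n-1})/S(P_{j_n-1})\to 1$ via Lemma \ref{Rosser Schoenfeld bounds for Mertens product}. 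The two arguments use the same three ingredients (all prime factors exceed $n$, the crude size bound $N_n<a\,n^n$, and Mertens), but yours has two genuine advantages: it does not route through Theorem \ref{asymptotic for b_alpha}, which the paper states as a consequence of Mertens and the prime number theorem but never proves, and it delivers an explicit rate, $H(N_n)=1+O\bigl(\log\log n/\log n\bigr)$, rather than a bare limit. The paper's version buys consistency with the $b_{\alpha}$ framework used throughout Section 1 and makes vivid how far the hypothetical configuration is from possible ($n^{\alpha}/\log n$ primes needed versus at most about $n$ available). Two minor points you should make explicit in a final write-up: $\frac{a}{b}n!$ must be an integer (automatic once $v_p(n!)\geq v_p(b)$ for all $p\mid b$, as you note), and the constants in Lemma \ref{Rosser Schoenfeld bounds for Mertens product} require $P_{j_n-1}$ and $P_{j_n+k_n-1}$ to exceed $286$, which holds for all large $n$.
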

\begin{proof} We will prove the result with $\frac{a}{b}n! - 1$. The proof is nearly identical for $\frac{a}{b}n! + 1$. \\ 

Fix positive integers $a$ and $b$. Fix an $\alpha >1$, and assume that there is an increasing sequence $n_1, n_2 \cdots $ such that for all $k$, $\frac{a}{b}n!$ is an integer and $H(\frac{a}{b}n_k! \pm 1) \geq \alpha$. For sufficiently $n_k$, the smallest prime divisor of $\frac{a}{b}n_k! - 1$ is at least $n+1$. According to Theorem \ref{asymptotic for b_alpha} for any $\epsilon>0$, for sufficiently large $n_k$ we have
that $\frac{a}{b}n_k! \pm 1$ has at least $(1-\epsilon)\frac{(n_k+1)^{\alpha}}{\alpha \log (n_k+1)}$ distinct prime factors. Choose $\epsilon=1/2$. Then for sufficiently large $n_k$, we must have
$$(n_k+1)^{\frac{1}{2}\frac{(n_k+1)^{\alpha}}{\alpha \log (n_k+1)}} <  \frac{a}{b}n_k! -1 < a {n_k}^{n_k}.$$ This implies that
$$(n_k)^{\frac{1}{2}\frac{(n_k)^{\alpha}}{\alpha \log (n_k+1)}} <  \frac{a}{b}n_k! -1 < a {n_k}^{n_k},$$

and hence $$\frac{1}{2}\frac{(n_k)^{\alpha}}{\alpha \log (n_k+1)} < n_k.$$

However, this inequality cannot be true for any sufficiently large $n_k$ since $\alpha >1$. Thus, there are only finitely many possible values for $n_k$ and the result is proved.
\end{proof}

With a little work one can use essentially the exact same method as above to prove the following slightly more general result:

\begin{theorem}
Let $g(n)$ be an increasing function satisfying the following conditions:
\begin{enumerate}
    \item If $n$ is sufficiently large, then for any prime $p$ with $p \leq n$, $(p,g(n))=1$.
    \item  There is a constant $C$ such that $g(n) < n^{Cn}$ for all sufficiently large $n$. 
\end{enumerate}
Then $$\lim_{n \rightarrow \infty} H(g(n)) = 1.$$
\end{theorem}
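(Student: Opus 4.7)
The plan is to transplant the argument of Theorem \ref{small case factorial limit theorem} almost verbatim, since the two hypotheses on $g$ isolate precisely the features of $\frac{a}{b}n!\pm c$ that the earlier proof actually used: hypothesis (1) forces every prime divisor of $g(n)$ to exceed $n$, and hypothesis (2) controls $g(n)$ from above by something of the form $n^{O(n)}$. First I would assume, for contradiction, that the conclusion fails; then there exist a real $\alpha > 1$ and an increasing sequence $n_1 < n_2 < \cdots$ with $H(g(n_k)) \geq \alpha$ for every $k$.

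By hypothesis (1), once $k$ is sufficiently large the smallest prime divisor of $g(n_k)$ is at least $n_k + 1$; let $P_{j(k)}$ denote that smallest prime divisor, so $P_{j(k)} \geq n_k + 1$. Because $\frac{x}{x-1}$ is decreasing, the minimum number of primes $\geq P_{j(k)}$ whose $\frac{p}{p-1}$-product can reach $\alpha$ is exactly $b_{\alpha}(P_{j(k)})$, so $g(n_k)$ has at least this many distinct prime factors. Applying Theorem \ref{asymptotic for b_alpha} with $\epsilon = 1/2$, and noting that $x^\alpha/(\alpha \log x)$ is eventually increasing, I conclude that for all sufficiently large $k$,
$$\omega(g(n_k)) \;\geq\; b_{\alpha}(P_{j(k)}) \;\geq\; \frac{(n_k+1)^{\alpha}}{2\alpha \log(n_k+1)}.$$

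Since each prime factor of $g(n_k)$ is at least $n_k + 1$, this yields the lower bound
$$g(n_k) \;\geq\; (n_k+1)^{\,\frac{(n_k+1)^{\alpha}}{2\alpha \log(n_k+1)}}.$$
Taking logarithms and comparing with hypothesis (2), which gives $\log g(n_k) < C n_k \log n_k$, one obtains
$$\frac{(n_k+1)^{\alpha}}{2\alpha} \;<\; C\, n_k \log n_k.$$
For $\alpha > 1$ the left-hand side grows like $n_k^{\alpha}$ and dominates $n_k \log n_k$, so this inequality is absurd for large $k$. This contradicts the existence of the sequence $\{n_k\}$, and combined with the trivial bound $H \geq 1$ gives $\lim_{n \to \infty} H(g(n)) = 1$.

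The only real technical ingredient is Theorem \ref{asymptotic for b_alpha}, exactly as in the factorial case, and I do not anticipate a genuinely new obstacle. The hypotheses have been formulated precisely to supply the two inputs needed: a lower bound on the number of prime factors coming from Mertens-style counting, and an upper bound from the growth constraint. The argument closes because $n^{\alpha}$ beats $n \log n$ for every $\alpha > 1$; if one wished to weaken hypothesis (2) to $g(n) \leq n^{h(n) n}$ for some slowly growing $h$, the same proof would go through as long as $h(n) = o(n^{\alpha - 1})$ for every $\alpha > 1$, i.e., $h(n) = n^{o(1)}$.
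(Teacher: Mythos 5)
Your proposal is correct and follows essentially the same route the paper intends: the paper gives no separate proof of this theorem, stating only that it follows by the same method as Theorem \ref{small case factorial limit theorem}, and your argument is exactly that method, with hypothesis (1) supplying the lower bound on the smallest prime factor and hypothesis (2) replacing the bound $\frac{a}{b}n!-1 < a n^n$. The details (the reduction to $b_{\alpha}$ via monotonicity of $\frac{x}{x-1}$, the use of Theorem \ref{asymptotic for b_alpha} with $\epsilon = 1/2$, and the final comparison of $n^{\alpha}$ against $Cn\log n$) all check out.
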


Four functions which seem natural  to investigate in the context of Theorem \ref{small case factorial limit theorem}, are $A(n)=h(n!+1)$, $B(n)=h(n!-1)$, $C(n)=H(n!+1)$, and $D(n)=H(n!+1)$. We will focus on $A(n)$, although the same remarks apply essentially to the other three.

One might hope that $A(n)$ would be a decreasing function given our earlier limit and the fact that $A(1) = \frac{3}{2}$, and that $A(n) >1$ for all $n>1$. However, this is not the case. $A(3)=h(7)=\frac{8}{7} = 1.1428 \cdots $, and $A(4) = h(25) = \frac{31}{25} = 1.24$.  It is easy to find similar values for $B(n)$, $C(n)$, and $D(n)$.

We suspect that this non-decreasing behavior is not simply an artifact of small numbers. In particular we have the following conjecture.

\begin{conjecture} There are arbitrarily large $n$ such that $A(n)< A(n+1)$. \label{A(n) bump conjecture}
\end{conjecture}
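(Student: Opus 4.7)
The plan is to identify a tractable sufficient condition on the pair $(n, n+1)$ which forces $A(n) < A(n+1)$, and then to exhibit infinitely many $n$ meeting it. The cleanest such condition I see is $(\star)$: $n!+1$ is prime while $(n+1)!+1$ is composite. The implication $(\star)\Rightarrow A(n) < A(n+1)$ is short. If $n!+1 = p$ is prime then $A(n) = h(p) = 1 + 1/(n!+1)$, which is the minimum possible value of $h$ at an integer of that size. If $(n+1)!+1$ is composite, its smallest prime factor $q$ satisfies $q \leq \sqrt{(n+1)!+1}$, and an elementary check (which reduces to $n! > n-1$) gives $\sqrt{(n+1)!+1} < n!+1$ for all $n \geq 1$, so
$$A(n+1) \geq 1 + \frac{1}{q} > 1 + \frac{1}{n!+1} = A(n).$$

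The main obstacle is producing infinitely many $n$ satisfying $(\star)$. The first clause alone---that $n!+1$ is prime---is essentially the factorial prime problem, a classical open question for which only finitely many such $n$ are currently known. Under the standard heuristic that ``$n!+1$ prime'' and ``$(n+1)!+1$ prime'' behave as essentially independent events, each with probability comparable to $1/\log(n!)$, one predicts that the values of $n$ where both hold are finite in number, so that $(\star)$ should be satisfied at every sufficiently large factorial prime. Thus this line of attack would give only a proof conditional on the infinitude of factorial primes.

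To sidestep factorial primes one could try to weaken $(\star)$: require only that every prime factor of $n!+1$ exceeds some threshold $T(n)\to\infty$, so that $A(n) - 1 \leq k(n)/T(n)$ with $k(n)$ the number of distinct prime factors, while demanding that $(n+1)!+1$ possess some prime factor below $T(n)$. Making this argument work seems to require either a lower bound on the smallest prime factor of $n!+1$ growing faster than the trivial bound $n+1$, or distributional information about the small prime factors of $(n+1)!+1$. Neither appears accessible by current techniques, and both look at least as deep as the factorial prime problem itself. I therefore expect that any unconditional resolution of Conjecture~\ref{A(n) bump conjecture} will require a genuine new input about the prime factorizations of integers close to $n!$, and that is where the real difficulty lies.
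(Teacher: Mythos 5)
This statement is a conjecture in the paper, not a proved theorem: the author offers no proof, only the observation that whenever $n!+1$ is prime and $(n+1)!+1$ is composite one has $A(n)<A(n+1)$, together with the belief that factorial primes are infinite in number. Your condition $(\star)$ is exactly that observation, and your verification of the implication (via $\sqrt{(n+1)!+1} < n!+1$) is correct and is actually more detail than the paper supplies; your assessment that any unconditional resolution is out of reach of current techniques matches the paper's decision to leave the statement as a conjecture. So your proposal takes essentially the same (conditional, heuristic) route as the paper and correctly identifies where the genuine difficulty lies.
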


We have similar conjectures for the other three function.

\begin{conjecture} There are arbitrarily large $n$ such that $B(n)< B(n+1)$. \label{B(n) bump conjecture}
\end{conjecture}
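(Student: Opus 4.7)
The plan is to prove Conjecture \ref{B(n) bump conjecture} by isolating an infinite set of indices $n$ for which the upward jump $B(n+1) > B(n)$ can be verified from structural information about the factorizations of $n!-1$ and $(n+1)!-1$. The key starting observation is the identity $(n+1)!-1 = (n+1)(n!-1) + n$, which yields $\gcd(n!-1,(n+1)!-1) \mid n$; combined with the fact that every prime factor of either side must exceed $n+1$, this forces $\gcd(n!-1,(n+1)!-1)=1$. So $B(n)$ and $B(n+1)$ record contributions from entirely disjoint sets of primes, which heuristically makes them behave like independent observations of the abundancy of an integer of size $n!$ all of whose prime factors exceed $n$.

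Building on this, I would exploit the contrast between atypical and typical factorizations. Call $n$ \emph{rigid} if the smallest prime factor of $n!-1$ exceeds a threshold $T(n)$; then the Mertens-style argument already used in the proof of Theorem \ref{small case factorial limit theorem} (bounding $\omega(n!-1) \leq \log(n!)/\log T(n)$ and estimating $\prod p/(p-1)$) gives $B(n)-1 \ll (\log n!)/(T(n)\log T(n))$. Call $n$ \emph{flexible} if $n!-1$ has some prime factor at most $S(n)$, so that $B(n)-1 \geq 1/S(n)$. The conjecture would follow if one could produce infinitely many $n$ with $n$ rigid and $n+1$ flexible, where $1/S(n+1)$ comfortably exceeds the rigid bound: choosing, for example, $T(n) \geq n^{2+\varepsilon}$ and $S(n+1) \ll n$ would cleanly force $B(n+1)>B(n)$ on the entire family for all large $n$. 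A complementary attack would be a second-moment argument: if one could estimate $\sum_{n \leq N}(B(n)-1)$ and $\sum_{n \leq N}(B(n)-1)^2$ finely enough to detect fluctuations on the scale of the typical value, a standard variance argument would rule out eventual monotonicity.

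The main obstacle is the near-total lack of control over the prime factorization of $n!-1$. Deciding whether a given prime $p$ divides $n!-1$ amounts to a Wilson-type congruence $n! \equiv 1 \pmod p$, which no current technique can productively detect when $p$ grows with $n$. In particular, existing sieve methods do not show that the smallest prime factor of $n!-1$ exceeds $n^{1+\varepsilon}$ for infinitely many $n$, which is precisely the regime in which the rigid/flexible dichotomy above produces a clean jump. An Erd\H{o}s--Kac-type law for $\omega(n!\pm 1)$, or a quantitative strengthening of Theorem \ref{small case factorial limit theorem} that pins down the typical size of $B(n)-1$, would be the natural analytic input, but these appear to require genuinely new sieve-theoretic ideas tailored to the sequence $n!-1$ rather than a direct extension of the methods of this paper. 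Absent such input, the conjecture looks out of reach by elementary means, and I would expect progress to come from either a conditional argument (e.g.\ assuming the infinitude of factorial primes together with a tame upper bound on $\omega((n+1)!-1)$) or a genuine innovation in sieving factorial shifts.
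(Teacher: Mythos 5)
You have not proved the statement, but neither does the paper: this is one of four parallel conjectures that the paper explicitly leaves open, supporting them only with the remark that standard conjectures about primes imply them. So the right standard here is whether your proposal is an accurate assessment of the problem, and on that score it is largely sound. Your identity $(n+1)!-1=(n+1)(n!-1)+n$ is correct, and since every prime factor of $n!-1$ or of $(n+1)!-1$ exceeds $n$ while the gcd divides $n$, the coprimality conclusion follows; that is a genuine (if modest) structural observation. Your diagnosis of the obstruction --- that detecting whether a prime $p$ growing with $n$ divides $n!-1$ is a Wilson-type congruence beyond current technology, so the rigid/flexible dichotomy cannot be instantiated unconditionally --- matches the reason the paper stops at a conjecture.

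One concrete improvement: the cleanest conditional route is the one the paper sketches for $A(n)$, and it transfers to $B(n)$ more simply than your rigid/flexible scheme. If $n!-1$ is prime then $B(n)-1=1/(n!-1)$, while if $(n+1)!-1$ is composite it has a prime factor at most $\sqrt{(n+1)!-1}$, so $B(n+1)-1\geq \left((n+1)!-1\right)^{-1/2}>1/(n!-1)$ for $n\geq 3$, giving $B(n)<B(n+1)$. Compositeness of $m!-1$ occurs for infinitely many $m$ unconditionally, since $(p-2)!\equiv 1 \pmod{p}$ for every prime $p$, so $p\mid (p-2)!-1$ and $(p-2)!-1$ is composite once it exceeds $p$. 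Thus the conjecture reduces entirely to the widely believed (but open) infinitude of primes of the form $n!-1$, with thresholds $T$ and $S$ playing no role. Your second-moment suggestion, by contrast, requires asymptotics for $\sum_n (B(n)-1)$ that are well beyond anything the Mertens-type estimates in this paper can deliver, so I would drop it as a proposed line of attack.
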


\begin{conjecture} There are arbitrarily large $n$ such that $C(n)< C(n+1)$. \label{C(n) bump conjecture}
\end{conjecture}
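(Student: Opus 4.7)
The plan is to argue by contradiction. Suppose $C(n+1) \le C(n)$ for all sufficiently large $n$; since $C(n) \to 1$ by Theorem \ref{small case factorial limit theorem}, the sequence would then be eventually monotone decreasing to $1$. To refute this, I would try to exhibit an infinite family of $n$ for which $C(n+1)$ is provably larger than $C(n)$.

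The first ingredient is a Wilson-theorem observation: whenever $q = n+2$ is prime, $(n+1)! \equiv (q-1)! \equiv -1 \pmod{q}$, so $q \mid (n+1)!+1$, giving the unconditional lower bound
\[
C(n+1) \;\ge\; \frac{n+2}{n+1}.
\]
On the other side, every prime divisor of $n!+1$ is at least $n+1$, and a short calculation shows $n+2 \nmid n!+1$ when $n+2$ is an odd prime (because $n+1 \equiv -1 \pmod{n+2}$ and Wilson give $n! \equiv 1 \pmod{n+2}$), so the prime divisors are in fact $\ge n+3$ in that regime. The cleanest sufficient condition forcing $C(n) < (n+2)/(n+1)$ is that $n!+1$ itself be prime, since then $C(n) = 1 + 1/n!$, which is strictly less than $1 + 1/(n+1)$ for every $n \ge 3$; combined with $n+2$ being prime, this yields the desired strict inequality $C(n+1) > C(n)$.

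The main obstacle is that each half of this sufficient condition is an open problem. The infinitude of factorial primes $n!+1$ is a classical conjecture, and we have no unconditional handle on the smallest prime factor of $n!+1$ in the composite case. A weaker sufficient condition would be that the smallest prime factor of $n!+1$ exceed roughly $k(n+2)$, where $k = \omega(n!+1)$, so that $H(n!+1) < (n+2)/(n+1)$; a sieve-theoretic attack on the distribution of small prime factors of shifted factorials could in principle suffice without requiring outright primality of $n!+1$. However, nothing currently in the literature is strong enough to rule out the possibility that $n!+1$ consistently picks up many prime divisors just slightly exceeding $n+2$, which would swamp the bound $(n+2)/(n+1)$. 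I therefore expect any unconditional proof to require a genuinely new input on the multiplicative structure of $n!+1$, which is presumably why the statement is recorded here only as a conjecture.
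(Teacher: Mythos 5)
This statement is recorded in the paper as a conjecture, not a theorem: the paper offers no proof, only the remark that standard conjectures about primes imply it (specifically, whenever $n!+1$ is prime and $(n+1)!+1$ is composite one gets $C(n)<C(n+1)$, and the infinitude of factorial primes together with Wilson's theorem would supply infinitely many such $n$). So you were right not to claim a proof, and your conditional reasoning is correct as far as it goes: Wilson's theorem does give $(n+2)\mid (n+1)!+1$ when $n+2$ is prime, hence $C(n+1)\ge \frac{n+2}{n+1}$, and if $n!+1$ is prime then $C(n)=1+\frac{1}{n!}<\frac{n+2}{n+1}$ for $n\ge 3$.

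The one substantive criticism is that your sufficient condition is needlessly strong. You require \emph{two} simultaneous prime conditions ($n!+1$ prime and $n+2$ prime), and the infinitude of such $n$ does not follow from either classical conjecture alone. The paper's route needs less: if $n!+1$ is prime and $(n+1)!+1$ is merely \emph{composite}, then the smallest prime factor of $(n+1)!+1$ is at most $\sqrt{(n+1)!+1}<n!$, so $C(n+1)>1+\frac{1}{n!}\ge C(n)$, with no appeal to the primality of $n+2$. Moreover, a single hypothesis --- infinitely many factorial primes --- already forces infinitely many $n$ with $n!+1$ prime and $(n+1)!+1$ composite (otherwise the set of factorial primes would eventually be closed under $n\mapsto n+1$ and contain all large $n$, contradicting Wilson's infinitely many composites). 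So the conjecture follows from one standard conjecture rather than a simultaneous pair. Your closing assessment of the unconditional obstruction --- that nothing known rules out $n!+1$ accumulating many prime factors just above $n+1$ --- is accurate and consistent with why the paper leaves this as a conjecture.
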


\begin{conjecture} There are arbitrarily large $n$ such that $D(n)< D(n+1)$. \label{D(n) bump conjecture}
\end{conjecture}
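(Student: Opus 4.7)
The statement to be proved concerns the function $D(n)$, which the paper defines as $H(n!+1)$ but which --- in parallel to $B(n) = h(n!-1)$ and $C(n) = H(n!+1)$ --- is evidently intended to be $H(n!-1)$; I shall work with this reading. Theorem~\ref{small case factorial limit theorem} already gives $D(n) \to 1$, so the content of Conjecture~\ref{D(n) bump conjecture} is precisely that the decay is not eventually monotone.

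The plan rests on a clean factorization of the ratio $D(n+1)/D(n)$ using the multiplicative form of $H$. Writing $P_m$ for the set of prime divisors of $m$, note that for any prime $p \nmid n$ the two divisibilities $p \mid n!-1$ and $p \mid (n+1)!-1$ cannot both hold (the first forces $n! \equiv 1 \pmod p$, and then the second would force $n+1 \equiv 1 \pmod p$), while for $p \mid n$ the two memberships coincide and cancel. Hence
\begin{equation*}
\frac{D(n+1)}{D(n)} \;=\; \prod_{p \in P_{(n+1)!-1} \setminus P_{n!-1}} \frac{p}{p-1} \;\Big/\; \prod_{p \in P_{n!-1} \setminus P_{(n+1)!-1}} \frac{p}{p-1},
\end{equation*}
and the ratio exceeds $1$ whenever the ``new'' primes of $(n+1)!-1$ collectively outweigh the ``lost'' primes of $n!-1$.

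The cleanest special case in which to force this is when $n!-1$ is itself prime: then $P_{n!-1} = \{n!-1\}$, the denominator is exactly $(n!-1)/(n!-2)$, and any prime factor of $(n+1)!-1$ that is smaller than $n!-1$ suffices. If $(n+1)!-1$ is composite, its least prime factor is at most $\sqrt{(n+1)!-1}$, which is strictly less than $n!-1$ for all $n \geq 4$ (the inequality $(n+1)! < (n!-1)^2$ being elementary). Thus the conjecture would follow from the existence of infinitely many $n$ for which $n!-1$ is prime while $(n+1)!-1$ is composite.

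The main obstacle is that the infinitude of factorial primes of the form $n!-1$ (OEIS A002982) is itself a celebrated open problem, and even conditional on it one would still need to rule out the heuristically absurd but formally unproven possibility that $(n+1)!-1$ happens to also be prime on every such $n$. An unconditional attack would probably have to bypass factorial primes entirely: one might try to average the logarithmic ratio over $n \leq N$, where each prime $p$ contributes positively to $\log(D(n+1)/D(n))$ precisely when $n! \equiv (n+1)^{-1} \pmod p$ and negatively when $n! \equiv 1 \pmod p$. An equidistribution statement for $n! \bmod p$ as $n$ varies would then make the positive and negative contributions comparable and force infinitely many sign changes, but no such equidistribution appears to be within reach of current techniques, which is presumably why the statement is posed here as a conjecture.
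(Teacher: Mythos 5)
This statement is posed in the paper as a conjecture, and the paper supplies no proof of it; immediately after stating Conjectures \ref{A(n) bump conjecture}--\ref{D(n) bump conjecture} the author only remarks that ``standard conjectures about primes imply these conjectures,'' e.g.\ that $A(n) < A(n+1)$ whenever $n!+1$ is prime and $(n+1)!+1$ is composite. Your argument is precisely the analogue of that remark for the $-1$ case, and your reading of $D(n)$ as $H(n!-1)$ (correcting what is evidently a typo in the paper, which writes $H(n!+1)$ for both $C$ and $D$) is the right one. The individual steps you carry out are correct: no prime can divide both $n!-1$ and $(n+1)!-1$; if $n!-1$ is prime then $D(n)=\frac{n!-1}{n!-2}$; and if $(n+1)!-1$ is composite its least prime factor is at most $\sqrt{(n+1)!-1} < n!-1$ for $n \geq 4$, so $D(n+1) > D(n)$ in that case.

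The genuine gap is the one you name yourself: the reduction is to the existence of infinitely many $n$ with $n!-1$ prime and $(n+1)!-1$ composite, and the infinitude of factorial primes is a wide-open problem, so nothing unconditional has been established. Your closing suggestion of averaging $\log\bigl(D(n+1)/D(n)\bigr)$ via equidistribution of $n! \bmod p$ is likewise well beyond current techniques. So the proposal does not prove the conjecture --- but since the paper itself offers only the same conditional heuristic and no proof, there is no argument in the paper against which to find you wanting; your write-up is an honest and correct account of why the statement remains a conjecture rather than a theorem.
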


Note that standard conjectures about primes imply these conjectures. For example, whenever $n!+1$ is prime and $(n+1)!+1$ is composite, one will have that $A(n) < A(n+1)$ and $C(n) < C(n+1)$. Since it is believed that there are infinitely many prime values of $n!+1$, and since there are infinitely many values of $n!+1$ which are composite (by Wilson's Theorem), we should strongly believe Conjecture \ref{A(n) bump conjecture}. One hopes that these conjectures may be substantially easier to prove than statements like there being infinitely many primes of the form $n!+1$.

One obvious question is if there are infinitely many $n$ where $n!+1$ is composite but $A(n) < A(n+1)$. This question seems difficult but we tentatively conjecture that it is also yes. To see why this seems likely, note that if $n!+1$ is a semiprime with both prime factors near the square root, and $n!+1$ has a small prime divisor substantially smaller than its square root, we will have $A(n) < A(n+1)$.

Let $S_A$ be the set of positive integers such that $A(n) < A(n+1)$. Define $S_B$, $S_C$ and $S_D$ similarly for $B(n)$, $C(n)$, and $D(n)$. What is the density of these sets? The obvious conjectural natural densities for $S_A$ is 1/2 given the apparent slow rate which $A(n)$ tends towards 1. We do not see any way of even proving that this density exists. Curiously, we do not at this time see a way of even ruling out that $S_A$ has density 1. If that were the case, it would mean that $A(n) < A(n+1)$ on almost all $n$ but that $A(n)$ takes large drops at isolated $n$ which drive it downwards overall. This seems like it should be obviously false but we do not see any easy proof. \\

The following conjecture seems likely.

\begin{conjecture} $$\lim_{n \rightarrow \infty} H\left(\left(n!\right)^n+1\right)=1.$$ \label{Generous conjecture}
\end{conjecture}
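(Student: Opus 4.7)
The plan is to exploit the cyclotomic factorization
$$
(n!)^n + 1 \;=\; \prod_{\substack{d \mid 2n \\ d \nmid n}} \Phi_d(n!),
$$
and bound $H$ factor by factor. Since $\Phi_d(0)=1$ for $d>1$, each $\Phi_d(n!)$ is coprime to $n!$, so every prime divisor exceeds $n$. By the standard cyclotomic criterion, any prime $p\mid \Phi_d(n!)$ with $p\nmid d$ must satisfy $p\equiv 1\pmod d$, and here $p>n$ automatically forces $p\nmid d$ because every prime dividing $d\mid 2n$ is at most $n$. Distinct cyclotomic factors are therefore pairwise coprime (a shared prime $p>n$ would be forced to have two different orders modulo $p$), so $H((n!)^n+1)=\prod_d H(\Phi_d(n!))$ and it suffices to drive each factor to $1$ with enough uniformity.

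The central step handles the dominant factor, $d=2n$, whose prime divisors all lie in the single arithmetic progression $1\pmod{2n}$. The idea is to replace Lemma \ref{Rosser Schoenfeld bounds for Mertens product} by a Mertens-type estimate restricted to this progression, giving
$$
\log H(\Phi_{2n}(n!)) \;\le\; \sum_{p\mid \Phi_{2n}(n!)}\frac{1}{p-1} \;\le\; \sum_{\substack{p\le X_n\\ p\equiv 1\pmod{2n}}}\frac{1}{p} \;\sim\; \frac{\log\log X_n}{\phi(2n)},
$$
where $X_n$ bounds the largest prime factor. From $\Phi_{2n}(n!)\le 2^{\phi(2n)}(n!)^{\phi(2n)}$ we obtain $\log X_n=O(n\phi(2n)\log n)$, hence $\log\log X_n=O(\log n)$. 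Combined with the Rosser bound $\phi(2n)\gg n/\log\log n$, this yields $\log H(\Phi_{2n}(n!))=O((\log n\log\log n)/n)\to 0$.

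For the other cyclotomic factors the same argument gives $\log H(\Phi_d(n!)) = O(\log n/\phi(d))$, which tends to $0$ whenever $\phi(d)/\log n\to\infty$. For the remaining very small divisors I would invoke (a mild extension of) the general theorem following Theorem \ref{small case factorial limit theorem}: since $\Phi_d(n!)\le n^{\phi(d)\cdot n}$ and every prime factor exceeds $n$, that theorem applied with $C=\phi(d)$ gives $H(\Phi_d(n!))\to 1$ whenever $\phi(d)$ grows subpolynomially in $n$. Since every $d$ falls into one of these two regimes, and since the number of cyclotomic factors is $\tau(2n)-\tau(n)=n^{o(1)}$, multiplying the individual bounds preserves the limit $1$.

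The genuine difficulty will be the analytic estimate in the main step: a Mertens product estimate for primes in the progression $1\pmod d$ that is effective and uniform in a modulus $d\le 2n$ growing with $n$. Unconditional Siegel--Walfisz provides such uniformity only for $d\le(\log x)^A$, which is far too narrow, and a possible Siegel zero introduces an extra loss. The saving grace is that the upper bound $\log X_n$ is polynomial in $n\phi(d)$ and hence vastly larger than $d$, so after partial summation the standard argument should survive with explicit constants. Still, producing an explicit, uniform-in-modulus analogue of Lemma \ref{Rosser Schoenfeld bounds for Mertens product} for the progression $1\pmod d$ appears to be the principal hurdle separating this sketch from a complete proof.
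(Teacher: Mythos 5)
First, a point of orientation: the statement you are proving is presented in the paper only as a conjecture. The author offers no proof and explicitly remarks that Mertens' theorem by itself does not seem strong enough, so there is no ``paper proof'' to compare against. Your starting point --- the factorization $(n!)^n+1=\prod_{d\mid 2n,\ d\nmid n}\Phi_d(n!)$ into pairwise coprime cyclotomic factors whose prime divisors all exceed $n$ and lie in the progression $1\bmod d$ --- is a genuinely new idea relative to the paper, and I believe it is the right one.

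Nevertheless the sketch has a real gap, and it mislocates the main difficulty. The factor $d=2n$ that you treat as the central step is in fact the easy one: the $j$th prime $\equiv 1\pmod{2n}$ is at least $2nj+1$, and $\Phi_{2n}(n!)$ has at most about $n\phi(2n)$ prime factors since each exceeds $2n$, so $\log H(\Phi_{2n}(n!))\ll (\log n)/n$ by a purely elementary harmonic-sum estimate; the uniform Mertens-for-progressions asymptotic you identify as the principal hurdle (with its Siegel--Walfisz and Siegel-zero worries) is not needed, since for an \emph{upper} bound on $\sum 1/p$ the Brun--Titchmarsh inequality is already uniform and effective. The true obstruction is the aggregation over the many small and medium divisors. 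The number of cyclotomic factors, $\tau(2n)-\tau(n)$, can be as large as $\exp(c\log n/\log\log n)$, which exceeds every fixed power of $\log n$. Your bound in the small-$\phi(d)$ regime, once the paper's $H(g(n))\to 1$ criterion is made quantitative with $C=\phi(d)$ allowed to vary, is only $H(\Phi_d(n!))\le 1+O\bigl((\log\phi(d)+\log\log n)/\log n\bigr)$ per factor, and your bound in the other regime is $1+O(\log n/\phi(d))$ per factor; in either regime, multiplying $n^{o(1)}$ such factors yields $\exp\bigl(n^{o(1)}/\log n\bigr)$, which is unbounded. So the closing sentence ``since the number of cyclotomic factors is $\tau(2n)-\tau(n)=n^{o(1)}$, multiplying the individual bounds preserves the limit $1$'' is not valid reasoning. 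The approach does look salvageable: for $d\le\sqrt n$, combining Brun--Titchmarsh with the count $\omega(\Phi_d(n!))\le 2n\phi(d)$ gives $\log H(\Phi_d(n!))\ll(\log\phi(d)+\log\log n)/(\phi(d)\log n)$, and summing this over all $d\mid 2n$ gives $o(1)$ because $\sum_{d\mid 2n}1/\phi(d)$ and $\sum_{d\mid 2n}(\log d)/\phi(d)$ are bounded by powers of $\log\log n$, while the divisors $d>\sqrt n$ are handled by the elementary spacing bound. But that summation over the divisor lattice of $2n$, not the single progression modulo $2n$, is where the real work lies, and it is absent from your sketch.
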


Unfortunately, Mertens' Theorem by itself does not seem to be strong enough to prove Conjecture \ref{Generous conjecture}. It would likely require some result of the sort that the number of distinct prime factors of $(n!)^n +1$ must be much smaller than $\log_n (n!)$.

We use this paper as an opportunity to ask similar questions about $h(f(n))$ for other somewhat well-behaved fast growing functions $f(n)$. Let us consider the functions $J(n) = h(2^n-1)$ and $K(n) = h(2^n+1)$, $L(n) = H(2^n-1)$, and $M(n) = H(2^n+1)$. Unlike with our earlier quadruplet, the behavior in the $-1$ case here is substantially different than the $+1$ case. It is a straightforward consequence from Fermat's Little Theorem, and the fact that the sum of the reciprocals of the primes diverges, that $J(n)$ can be arbitrarily large. 

In contrast, it is less obvious that $K(n)$ can be arbitrarily large.
\begin{theorem} $K(n)$ can be arbitrarily large.
\end{theorem}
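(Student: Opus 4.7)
The plan is to mimic the scheme the paper sketches for $J(n)=h(2^n-1)$, which combined Fermat's little theorem with divergence of $\sum 1/p$, but with Fermat replaced by Euler's criterion and divergence of $\sum 1/p$ over all primes replaced by divergence along a single arithmetic progression.

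First I would single out the primes $p\equiv 3\pmod{8}$. For any such $p$: (i) $m_p:=(p-1)/2$ is odd, since $p\equiv 3\pmod{4}$; and (ii) $2$ is a quadratic nonresidue modulo $p$ (by the supplementary law of quadratic reciprocity, which says $2$ is a residue iff $p\equiv \pm 1\pmod 8$), so Euler's criterion gives $2^{m_p}\equiv -1\pmod p$, i.e.\ $p\mid 2^{m_p}+1$. The elementary identity $x+1\mid x^{\ell}+1$ for odd $\ell$ then yields $2^{m_p}+1\mid 2^{n}+1$ whenever $n$ is an odd multiple of $m_p$.

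Given $\alpha>1$, invoke Dirichlet's theorem in the strong form $\sum_{p\equiv 3\pmod 8}1/p=\infty$ to pick finitely many primes $p_1,\dots,p_r\equiv 3\pmod 8$ with $\sum_{i=1}^r 1/p_i>\alpha-1$. Set $n=\operatorname{lcm}(m_{p_1},\dots,m_{p_r})$. As an lcm of odd numbers, $n$ itself is odd, and for each $i$ the quotient $n/m_{p_i}$ is also odd, so the observation of the previous paragraph forces $p_i\mid 2^n+1$ for every $i$. Using the trivial bound $h(N)\geq 1+\sum_{p\mid N}1/p$, this gives
$$K(n)=h(2^n+1)\geq 1+\sum_{i=1}^r\frac{1}{p_i}>\alpha,$$
proving that $K$ is unbounded.

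The principal obstacle is the input from analytic number theory: the proof for $J(n)$ needs only the classical divergence of $\sum 1/p$, whereas here we need divergence of reciprocals within a single residue class, equivalent to the nonvanishing of the relevant Dirichlet $L$-function at $s=1$. Finding a more elementary substitute, or extracting an explicit rate for how large $n$ must be to force $K(n)>\alpha$, both appear non-trivial and are in the same spirit as the open questions the paper poses about $A$, $B$, $C$, and $D$.
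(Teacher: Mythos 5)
Your proposal is correct and follows essentially the same route as the paper: primes $p\equiv 3\pmod 8$, the fact that $2$ is a quadratic nonresidue there so $2^{(p-1)/2}\equiv -1\pmod p$, and the strong form of Dirichlet's theorem for the progression $3\pmod 8$. The only cosmetic differences are that you take $n$ to be the lcm of the odd exponents $(p_i-1)/2$ while the paper takes the odd part of $k!$, and you cite Euler's criterion where the paper cites Gauss's Lemma; both choices do the same job.
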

\begin{proof} Consider a prime $p \equiv 3$ (mod 8). Note that $2$ is a quadratic non-reside mod $p$. Thus, by Gauss's Lemma, $2^{\frac{p-1}{2}} \equiv -1$ (mod $p$). So, whenever $n$ is odd, and $\frac{p-1}{2}|n$, we have $p|2^n+1$. By the strong form of Dirichlet's theorem,  the sum of the reciprocals of the primes which are 3 mod 8 diverges. Thus, to make $h(2^n+1)$ arbitrarily large, we just need to set $n$ to be the odd part of $k!$ for large $k$. 
\end{proof}

One natural question in this context is to look at record setting values of $K(n)$. This becomes an analog of the superabundant numbers restricted to just looking at one more than a power of 2. More generally, given an increasing function $f(n)$, we will define a number to be a {\emph{$f$-superabundant}} if $h(f(n))> h(f(i))$ for any $i<n$. Note that the usual superabundant numbers are just the $f$-superabundant numbers where $f(x)$ is the identity function. 

In our context then, we are interested in the $K$-superabundant numbers. We will write the $n$th $K$-superabundant number as $a_n$. The first few are 
$1$, $3$,\, $5$, $9$, $15,$ $45$, $135,$ $315,$ $945 \cdots$. Note that the first few $a_i$ are odd. It seems likely that all the $a_i$ are odd because if $n$ is even, then $2^n+1$ is not divisible by 3, and that makes making $2^n+1$ large substantially more difficult. Proving that they are all odd seems difficult. A related open question is whether for any odd prime $p$, if $p|a_n$ for all arbitrarily large $n$? The answer seems very likely to be yes.

Note that the $J$-superabundant numbers behave very differently. They appear to be always even except the first term, and seem to grow much slower. The first few are 1, 2, 4, 6, 8, 12, 24, 36, 60, 120, and 180. (Here we are using the convention that $\sigma(0)=0$.)  If we set $t_n$ as the $n$th $J$-superabundant number, we can similarly ask if it is true that $p|t_n$ for all arbitrarily large $n$? Again, the answer seems likely to be yes.

\end{document}